\newcommand{\bba}{\mathbb{A}}\newcommand{\bbf}{\mathbb{F}}
\newcommand{\bbp}{\mathbb{P}}\newcommand{\bbr}{\mathbb{R}}
\newcommand{\bbz}{\mathbb{Z}}
\newcommand{\caa}{\mathcal{A}}
\newcommand{\cac}{\mathcal{C}}\newcommand{\cae}{\mathcal{E}}
\newcommand{\cag}{\mathcal{G}}
\newcommand{\cah}{\mathcal{H}}
\newcommand{\cak}{\mathcal{K}}
\newcommand{\cam}{\mathcal{M}}\newcommand{\can}{\mathcal{N}}
\newcommand{\cao}{\mathcal{O}}
\newcommand{\cau}{\mathcal{U}}\newcommand{\cav}{\mathcal{V}}
\newcommand{\cax}{\mathcal{X}}
\DeclareMathOperator{\aut}{Aut}
\DeclareMathOperator{\gal}{Gal}
\DeclareMathOperator{\Ker}{Ker}
\newcommand{\ov}{\overline}
\DeclareMathOperator{\spec}{Spec}
\newfont{\cyrm}{wncysc10}
\DeclareMathOperator{\ur}{ur}
\newtheorem{theorem}[subsection]{Theorem}%\numberwithin{theorem}{section}
\theoremstyle{plain}
\newtheorem{corollary}[subsection]{Corollary}
\newtheorem{lemma}[subsection]{Lemma}
\newtheorem{proposition}[subsection]{Proposition}
\theoremstyle{definition}
\newtheorem{definition}[subsection]{Definition}
\newtheorem{hypothesis}[subsection]{Hypothesis}
\newtheorem{example}[subsection]{Example}
\newtheorem{remark}[subsection]{Remark}
\numberwithin{equation}{subsection}
\begin{document}
\title[Normal subgroups of fundamental group]{Normal subgroups of the
  algebraic fundamental group of affine curves in positive characteristic}
\author[A. Pacheco]{Am\'{\i}lcar Pacheco}
\address{Universidade  Federal do Rio de Janeiro, Instituto de Matem\'atica,
Rua Guaiaquil  83, Cachambi,  20785-050 Rio de Janeiro, RJ, Brasil}
\email{amilcar@acd.ufrj.br}
\author[K. F. Stevenson]{Katherine F. Stevenson}
\address{California State University}
\email{katherine.stevenson@csun.edu}
\author[P. Zalesskii]{Pavel Zalesskii}
\address{Universidade de Bras\'{\i}lia}
\email{pz@mat.unb.br}
\thanks{The first and third authors were partially supported by CNPq research
grants  305731/2006-8 and 307823/2006-7, respectively. They were
also supported by Edital Universal CNPq 471431/2006-0.}
\date{\today}
\begin{abstract}
Let $\pi_1(C)$ be the algebraic fundamental group of a smooth connected affine curve, defined over an algebraically closed field of characteristic $p>0$ of countable cardinality. Let $N$ be a normal (resp. characteristic) subgroup  of $\pi_1(C)$. Under the hypothesis that the quotient $\pi_1(C)/N$ admits an infinitely generated Sylow $p$-subgroup, we prove that $N$ is indeed isomorphic to a normal (resp. characteristic) subgroup of a free profinite group of countable cardinality. As a consequence, every proper open subgroup of $N$  is a free profinite group of countable cardinality.
\end{abstract}
\maketitle
%%%paper%%%
%%%Introduction%%%
\section{Introduction}

Let $C$ be a smooth connected affine curve defined over an
%(?)countable
algebraically closed field $k$ of characteristic $p>0$. Let
$\pi_1(C)$ be its algebraic fundamental group. From the proof of Abhyankar's Conjecture, it is
 known which finite groups are quotients of $\pi_1(C)$ (\textsc{Harbater} \cite[Theorem 6.2]{ha} and \textsc{Raynaud}
\cite[corollaire 2.2.2]{ra}).  However, not much else is known about the
actual structure of the subgroups of $\pi_1(C)$.  The objective of
the paper is to give a description of the normal subgroup
structure of the fundamental group $\pi_1(C)$. We study
the infinite index normal subgroup structure of $\pi_1(C)$ and
prove that the majority of it coincides with the infinite index
normal subgroup structure  of a free profinite group.

For every $g\ge1$ let $P_g(C)$ be the intersection of algebraic fundamental groups $\pi_1(Z)$ of smooth connected affine curves $Z$ which are cyclic \'etale $p$-covers of $C$ and whose genus of its smooth compactification $\bar{Z}$ is at least $g$. This is a normal closed subgroup of $\pi_1(C)$. 

\begin{theorem}\label{mainthm}
Let $\pi_1(C)$ be the algebraic
  fundamental group of a smooth connected affine
curve $C$ defined over an algebraically closed field $k$ of
characteristic $p>0$ of countable cardinality.
Let $N$ be a normal
  (resp. characteristic) subgroup of
$\pi_1(C)$  such that a Sylow $p$-subgroup $M_p$ of the quotient
group $M=\pi_1(C)/N$ is infinitely generated. Suppose furthermore that $N$ is a subgroup of $\pi_1(C)$. Then $N$ is
isomorphic to a normal (resp. characteristic) subgroup of a free
profinite group of countable rank.
\end{theorem}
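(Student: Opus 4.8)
The plan is to compare the extension $1 \to N \to \pi_1(C) \xrightarrow{\pi} M \to 1$ with an extension $1 \to \tilde N \to \widehat F_\omega \to M \to 1$ whose middle term is the free profinite group $\widehat F_\omega$ of countable rank, and then to prove that the two kernels $N$ and $\tilde N$ are isomorphic profinite groups. The geometric input I would use is that $\pi_1(C)$ is quasi-free of rank equal to the cardinality of $k$: by the solution of Abhyankar's conjecture (\cite{ha}, \cite{ra}) together with Harbater's and Pop's realization of embedding problems over affine curves, every nontrivial finite split embedding problem for $\pi_1(C)$ acquires $\aleph_0$ independent proper solutions. For countable $k$ this rank is $\aleph_0$, the same as that of $\widehat F_\omega$, which is both quasi-free and projective. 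Thus $\pi_1(C)$ and $\widehat F_\omega$ differ only in that the former is not projective, and the whole point is that this difference is invisible to $N$.

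First I would record that $N$ is projective. Since $\operatorname{cd}_\ell(\pi_1(C)) = 1$ for every prime $\ell \neq p$ (the tame, prime-to-$p$ quotient being free, exactly as in characteristic zero), any closed subgroup satisfies $\operatorname{cd}_\ell(N) \le 1$ for $\ell \neq p$. At $p$ I would invoke the finiteness of $\operatorname{cd}_p(\pi_1(C))$ (bounded by $2$ for a curve) together with Serre's theorem on the cohomological dimension of closed subgroups: because $M_p$ is infinitely generated, the index $[\pi_1(C):N]=|M|$ is divisible by $p^{\infty}$, and hence $\operatorname{cd}_p(N) \le \operatorname{cd}_p(\pi_1(C)) - 1 \le 1$. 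Therefore $\operatorname{cd}(N) \le 1$, i.e. $N$ is projective. This is the precise sense in which the non-projectivity of $\pi_1(C)$, being concentrated at $p$, has been pushed into the quotient $M$ by the hypothesis on $M_p$.

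The heart of the argument --- and the step I expect to be the main obstacle --- is the comparison of the two kernels. I would isolate a purely group-theoretic statement: if $A$ and $B$ are quasi-free of countable rank, if $\alpha\colon A \twoheadrightarrow M$ and $\beta\colon B \twoheadrightarrow M$ are \emph{quasi-free covers} of $M$ (meaning every finite embedding problem lying over the fixed map to $M$ has $\aleph_0$ solutions), and if $\Ker\alpha$ and $\Ker\beta$ are projective, then $\Ker\alpha \cong \Ker\beta$. Applying this with $A=\pi_1(C)$, $\alpha=\pi$ and $B=\widehat F_\omega$, $\beta$ any epimorphism onto $M$, yields $N \cong \tilde N$. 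For $\widehat F_\omega$ the cover property is automatic from freeness. For $\pi$ it is where the hypothesis does its real work: embedding problems over $M$ with prime-to-$p$ kernel are solved using $\operatorname{cd}_\ell(\pi_1(C)) \le 1$ and quasi-freeness, but those whose kernel is a $p$-group correspond geometrically to wild covers, and producing $\aleph_0$ solutions of these compatibly over $M$ is the delicate point. I would carry it out by arguing that the infinitely generated Sylow $p$-subgroup $M_p$ lifts to an infinitely generated family of independent Artin--Schreier--Witt covers of $C$, so that the required wild solutions exist in abundance.

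Granting the comparison, $N$ is isomorphic to the normal subgroup $\tilde N$ of the free group $\widehat F_\omega$, which is the normal case. For the characteristic case I would choose $\beta$ so that $\tilde N$ is the characteristic subgroup of $\widehat F_\omega$ determined by the same invariant (the marked isomorphism type of the quotient $M$), and then verify that the comparison isomorphism intertwines the outer actions of $\aut(\pi_1(C))$ and $\aut(\widehat F_\omega)$ on $N$ and $\tilde N$; a characteristic $N$ then corresponds to a characteristic $\tilde N$. Finally the stated corollary is immediate: by Mel'nikov's theorem every proper open subgroup of a nontrivial closed normal subgroup of a free profinite group of countable rank is itself free of countable rank, and this transports across the isomorphism $N \cong \tilde N$.
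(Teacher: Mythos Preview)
Your approach is genuinely different from the paper's, but it contains a factual error and two real gaps.

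First, the factual error: $\pi_1(C)$ \emph{is} projective. For an affine curve over an algebraically closed field of characteristic $p$, one has $\operatorname{cd}_\ell(\pi_1(C))\le 1$ for all primes $\ell$, including $\ell=p$ (the maximal pro-$p$ quotient is free pro-$p$; see e.g.\ \cite{hais}). The paper uses this explicitly (Lemma~\ref{split}). So your Serre-type index argument for $\operatorname{cd}_p(N)\le 1$ is unnecessary, and your framing of ``non-projectivity of $\pi_1(C)$ concentrated at $p$'' is mistaken. The distinction between $\pi_1(C)$ and $\widehat F_\omega$ is not projectivity but \emph{freeness}: $\pi_1(C)$ is projective and quasi-free yet not free.

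Second, the central gap: the comparison statement you isolate---that two quasi-free covers of $M$ with projective kernels have isomorphic kernels---is precisely the content of the theorem, and you do not prove it. Your one-sentence sketch (``infinitely generated $M_p$ lifts to infinitely many independent Artin--Schreier--Witt covers'') does not address why solutions of embedding problems for $\pi_1(C)$ restrict surjectively to $N$, which is the actual difficulty. The paper handles this by proving directly that $N$ is \emph{homogeneous} (Theorem~\ref{homogeneous}): it constructs, via patching over $k[[t]]$ in the style of \cite{ku}, a solution of the relevant split embedding problem over an auxiliary $p$-cover $T'\to U_\Upsilon$ with $\gal(T'/U_\Upsilon)$ a quotient of $M_p$, and then uses a group-theoretic argument (\cite[Proposition 8.3.6]{riza}) exploiting $H\le M(\Gamma)$ to push the solution down to $N$. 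It then invokes \textsc{Melnikov}'s characterization (Theorem~\ref{melnikov}) and verifies separately that $N/M_l(N)$ (resp.\ $N/M_S(N)$) is trivial or infinite.

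Third, your characteristic case is not salvageable as stated. You cannot ``choose $\beta$ so that $\tilde N$ is characteristic'': the kernel of an epimorphism $\widehat F_\omega\twoheadrightarrow M$ is normal but has no reason to be characteristic, and there is no construction that forces it to be. The paper does not attempt this; instead it checks Melnikov's criterion (item~(3) of Theorem~\ref{melnikov}) directly by showing $N/M_S(N)$ is trivial or infinite for every finite simple $S$, using quasi-$p$ embedding problems when $p\mid |S|$ and the prime-to-$p$ free quotient when $p\nmid |S|$.
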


This gives an almost complete description of such subgroups if one
takes into account the known structure of the normal subgroups of a
free profinite group (\emph{cf.} \cite[chapter 8]{riza}). For
example one deduces immediately from Theorem \ref{mainthm} the
following result.

\begin{corollary}\label{cor1}
Every proper open subgroup of $N$ is a free
profinite group of countable rank.
\end{corollary}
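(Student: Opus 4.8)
The plan is to derive Corollary~\ref{cor1}
directly from Theorem~\ref{mainthm} by importing what is known about the normal
subgroup structure of free profinite groups. By Theorem~\ref{mainthm}, the
subgroup $N$ is isomorphic to a normal subgroup of a free profinite group $F$
of countable rank; since an isomorphism of profinite groups carries open
subgroups to open subgroups and preserves freeness, it suffices to prove the
statement with $N$ a normal subgroup of $F$. So I would reduce immediately to
the following purely group-theoretic assertion: if $F$ is a free profinite
group of countable rank and $N \normal F$, then every proper open subgroup of
$N$ is free profinite of countable rank.

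First I would dispose of the trivial cases. If $N$ itself is open in $F$, then
$N$ is a free profinite group of finite rank by the Nielsen--Schreier theorem
for free profinite groups, and its proper open subgroups are again free of
finite rank, but the interesting case is $N$ of infinite index. The key input
is the structure theory of infinite-index normal subgroups of free profinite
groups developed in \cite[chapter~8]{riza}: such an $N$ is, up to the relevant
closure operation, describable via the quotient $F/N$ acting on a free
profinite object, and in particular every \emph{proper} open subgroup of $N$
is free. Concretely, I would invoke the theorem that an open subgroup $U$ of a
normal subgroup $N$ of infinite index in a free profinite group $F$ is itself
free profinite, together with the cardinality bookkeeping that identifies its
rank as countable (the rank is controlled by the weight of $F$ and the index
data, all of which are countable here).

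The main obstacle I anticipate is not the freeness itself but establishing
that the open subgroup in question is \emph{proper} in the right sense and
pinning down the rank as exactly countable rather than finite or uncountable.
The freeness results for subgroups of normal subgroups of free profinite
groups typically require the subgroup to be proper (a normal subgroup of
infinite index need not be free, so one really does need to pass to a proper
open subgroup), so I would check carefully that Corollary~\ref{cor1}'s
hypothesis ``proper open subgroup'' matches the hypothesis of the cited
theorem. For the rank, I would argue that a proper open subgroup $U \le N$ has
infinite index in $F$ (because $N$ does), so it cannot be open in $F$ and hence
cannot be finitely generated as a free profinite group; combined with the
countable weight of $F$ this forces the rank to be exactly $\aleph_0$.

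Thus the corollary follows by quoting the isomorphism from
Theorem~\ref{mainthm} and feeding it into the free-subgroup theorem for normal
subgroups of free profinite groups from \cite[chapter~8]{riza}; the only real
care needed is in the hypotheses on properness and in the countability of the
rank.
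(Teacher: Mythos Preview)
Your approach is essentially the paper's: reduce via Theorem~\ref{mainthm} to $N$ being (isomorphic to) a normal subgroup of a free profinite group of countable rank, then invoke the structure result from \cite[chapter~8]{riza}---the paper cites specifically \cite[Theorem~8.7.1]{riza}---that every proper open subgroup of such an $N$ is free profinite of countable rank. One small slip: if $N$ were open in $F$ (which has \emph{countably infinite} rank), then $N$ and its open subgroups would be free of countable rank, not finite rank; but this does not affect your argument, and in any case the cited theorem handles both the open and infinite-index situations uniformly.
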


In the proof of the above theorem we use a strong result due to
\textsc{Pop} \cite[Theorem B]{po} which asserts that every finite
\emph{quasi $p$-embedding problem} of $\pi_1(C)$ is properly
solvable (\emph{cf.} also \cite[Corollary 4.6]{hacr}). This result
also implies \textsc{Abhyankar}'s conjecture mentioned above. In
fact, the latter result provides the existence of a proper solution
for a specific finite quasi-$p$ embedding problem.

Unfortunately, \textsc{Abhyankar}'s  conjecture does not hold for
smooth connected projective curves defined over algebraically
closed fields of characteristic $p>0$. Indeed, in the case of a
finite group that is an extension of a finite group of order
prime to $p$ by a finite $p$-group, the two first authors
established in \cite[Theorem 1.3]{past} a necessary and sufficient
condition (\emph{cf.} \cite[condition A]{past}) for
\textsc{Abhyankar}'s conjecture to hold for such a group.
Subsequently this was extended by \textsc{Borne} to the case where
the first group does not necessarily have order prime to $p$,
using modular representation theory  (\emph{cf.} \cite[Theorem
1.1]{bo}).

We also use in the proof of the above theorem the fact that  \textsc{Artin-Schreier}
covers of the projective line can be obtained with arbitrarily high genus. These
covers are necessarily wildly ramified at only one point. Even more,
there is no finite \'etale non-trivial cover of the projective line. So,
the method used in the proof gives no information about the validity of a
similar result in the case of smooth connected projective curves
defined over algebraically closed fields of positive
characteristic. It remains an open problem whether some other strategy
might lead to such a result. However, there is one reason to be skeptical \emph{a
priori}. In contrast with patching
techniques for affine curves,  patching techniques for projective
curves do not in general preserve the base curve.

In fact, the following result was proved in \cite[Theorem
3.1]{st}. Let $G$ be a finite group and suppose $G$ is generated
by two subgroups $G_1$ and $G_2$. Assume that for $i=1,2$ the group $G_i$
is a \textsc{Galois} group over a smooth connected
projective curve $C_i$, \'etale except at one
point $x_i\in C_i$.  Moreover assume that the ramification is
tame over each $x_i$ with cyclic inertia groups generated by $\tau_i \in G_i \subset G$ such that $\tau_1 =\tau_2^{-1}$ in $G$. Then $G$ is realized as an \'etale \textsc{Galois} group
over almost all smooth connected projective curves of genus $g_1+g_2$ where $g_i$
the genus of $C_i$. So, not only is the base
curve not preserved, but also the genus of the curve obtained via
 patching techniques  grows with respect to those of the original curves.

Nevertheless, results similar to Theorem \ref{mainthm}, Corollary
\ref{cor1} and Theorem \ref{homogeneous} were proved for the
algebraic fundamental group of smooth connected projective curves
defined over algebraically closed fields of characteristic zero by
the third author in \cite[Theorem 1.1, Corollary 2.5, Theorem
2.2]{za}, respectively.
Consider then the following three objects:
\begin{itemize}
\item free profinite groups;
\item algebraic fundamental groups of connected projective curves defined over algebraically closed fields of  characteristic zero; and
\item algebraic fundamental groups of connected affine curves defined over algebraically closed fields of countable cardinality and positive characteristic $p>0$;
\end{itemize}
The results mentioned above lead us to think that although these 
three objects are dissimilar in many ways, when we consider the family of projective normal subgroups $N$ of each $\Pi$ such that the quotient 
$\Pi/N$ has an infinitely generated Sylow p-subgroup, this family behaves in exactly the same way in the three cases. 
Therefore we say that their ``\emph{$p$-deep structure}'' are
alike.

In Section 2 we discuss some preliminaries on profinite group
theory, and in Section 3 we discuss a particular family of finite
$p$-groups related to the group $N$. In Section 4 we solve the
embedding problem which suffices to prove Theorem
\ref{homogeneous}. It should be noted that we do not need any
further patching result other than those already present in
\cite{ku}. We acknowledge the author of the preprint as having
constructed an original solution in the case of the commutator
subgroup of $\pi_1(C)$. One of our contributions was to observe
that the \emph{gestalt} of his construction did not require any
additional information on the \textsc{Galois} groups of the
covers. So we were able to replace finite \emph{abelian covers} by
finite \emph{$p$-covers}. It should be noted however that our Corollary \ref{cor1} implies the main result of \textsc{Kumar} \cite[Theorem 4.1]{ku} (\emph{cf.} Corollary \ref{kumar}) and is, in fact, more general than the latter cited result. Additionally, in his thesis \cite[\S 7.3 and Theorem 7.13]{kuth} \textsc{Kumar} comments that his result should hold if one replaces the commutator subgroup by kernel of an epimorphism of $\pi_1(C)$ to an infinitely generated abelian pro-$p$ group. However, this result is also a consequence of Theorem \ref{homogeneous}, and we also show in Example \ref{examku} that Theorem \ref{homogeneous} addresses situations which are not covered by the latter comment in \cite{kuth}. Finally, in Section 5 we obtain group
theoretic consequences from Theorems \ref{melnikov} and
\ref{homogeneous}, and in the Appendix we rewrite the proof of \cite[Theorem 2.2]{za} connecting it with the present paper (\emph{cf.} Theorem \ref{homogeneous}).

\section{Profinite group theory}
For the reader's convenience we review in this section
\textsc{Melnikov}'s characterization of accessible, normal,  and
characteristic subgroups of free profinite groups. We restrict the
discussion to  \emph{second countable} profinite groups
(\emph{i.e.}, groups that have a countable basis of open subsets),
since this is sufficient for our purposes and simplifies the
terminology. Beyond this review, the main function of this section
is to translate the problem of proving that each normal subgroup
$N$ of $\pi_1(C)$ is isomorphic to a normal subgroup of a free
profinite group of countable rank into one of solving finite split
embedding problems for $N$ that arise from those for $\pi_1(C)$
(see Definition \ref{embprob}). This is done in two steps.  First
using a theorem of \textsc{Melnikov} (Theorem \ref{melnikov}
below) we reduce our question to the problem to showing that given finite groups
$\Gamma$ and $G$ the embedding problem $\cae$ defined by
\begin{equation}\label{epn1}
\xymatrix { &N \ar@{>>}[d]^{\psi}\\
\Gamma \ar@{>>} [r]^{\alpha}&G.}
\end{equation}
admits a proper solution under certain additional hypotheses. Then
Lemma \ref{split} allows us, roughly speaking, to assume that
$\cae$ is a split embedding problem.

\subsection{Profinite groups}

\begin{definition}
A closed subgroup $H$ of a profinite group ${\Pi}$ is said to be  {\it
accessible}, if there exists a chain of closed subgroups of  ${\Pi}$
\begin{equation}\label{defacceq}
H={G}_\mu\le \cdots \le {G}_\lambda\le \cdots \le {G}_2\le
{G}_1={\Pi},
\end{equation}
indexed by  the ordinals smaller than a certain ordinal $ \mu $,
such that
\begin{itemize}
\item ${G}_{\lambda+1}\triangleleft {G}_\lambda$ for all ordinals
$\lambda\le \mu$, and
\item if $\nu$ is a limit ordinal such that $\nu\leq\mu$, then
${G}_\nu=\bigcap_{\lambda\le\nu} {G}_\lambda$.
\end{itemize}
\end{definition}

\begin{remark}
Recall that a subgroup $K$ of a group ${\Pi}$ is called \emph{subnormal}
if $K$ is a member of a normal series of ${\Pi}$.
Observe that $H$ is accessible if and only if the image of $H$ is
subnormal in every finite quotient of ${\Pi}$.
\end{remark}

\begin{definition}\label{embprob}
Let ${G}_1$, ${G}_2$ and ${G}_3$ be profinite groups. An \emph{embedding
  problem} $\cae$ consists of a pair of epimorphisms
\begin{equation}\label{defep}
\xymatrix { &{G}_1 \ar@{>>}^{f_1}[d]\\
{G}_2 \ar@{>>}^{f_2} [r]&{G}_3.  }
\end{equation}
The embedding problem $\cae$ is
  \emph{properly solvable}, if there exists an epimorphism
  $f_3:{G}_1\twoheadrightarrow {G}_2$ such that
  $f_2\circ f_3=f_1$, \emph{i.e.}, the
  diagram (\ref{defep}) commutes. The map $f_3$ is called a
  \emph{proper solution} for $\cae$.
\end{definition}

\begin{definition}
For a group $G$, let $M(G)$ denote the intersection of maximal
normal subgroups of ${G}$ and let $R_p(G)$ denote the kernel of the
epimorphism to the maximal pro-$p$ quotient.

If $S$ is a finite simple group, $M_S(G)$ will denote the kernel
of the epimorphism to the maximal direct power of $S$; if $l$ is a prime number and $S$ is a finite simple $l$-group, then $S$ is necessarily isomorphic to the cyclic group of order $l$. Therefore we shall use the notation $M_l(G)$.
 \end{definition}

\begin{definition}
A second countable profinite group ${\Pi}$ is said to be
\emph{homogeneous}, if given any finite groups $\Gamma$ and $G$ and an
embedding problem $\cae$ defined by
\begin{equation}\label{defhomo}
\xymatrix { &{\Pi} \ar@{>>}[d]^f\\
\Gamma \ar@{>>} [r]^{\alpha}&G,  }
\end{equation}
$\cae$ admits a proper solution under the additional hypotheses
that $H:=\Ker(\alpha)$ is  minimal normal and $H\leq M(\Gamma)$.
\end{definition}

The next theorem collects facts originally proved by
\textsc{Melnikov} about homogeneous profinite groups and it also can
be found in \cite[chapter 8]{riza}. More precisely items (1) follows from Theorem 8.5.4, item (2) is a consequence of Corollary 8.6.4 combined with Theorem 8.6.11, item (3) follows from Theorem 8.10.3, and finally item (4) is a consequence of Theorem 8.10.3 combined with Theorem 8.10.2. 

\begin{theorem}\label{melnikov}
Let ${\Pi}$ be a profinite group. Then
\begin{enumerate}
\item  ${\Pi}$ is homogeneous if and only if it is isomorphic to an
accessible subgroup of free profinite group of countable rank;
\item ${\Pi}$ is isomorphic to a normal subgroup of a free profinite
group of countable rank if and only if ${\Pi}$ is homogeneous and
${\Pi}/M_l({\Pi})$ is either trivial or infinite for every prime number $l$;
\item  ${\Pi}$ is isomorphic to a characteristic subgroup of a free
profinite group if and only if ${\Pi}$ is homogeneous and ${\Pi}/M_S({\Pi})$
is either trivial or infinite for every finite simple group
$S$.
\item ${\Pi}$ is free profinite of countable rank if and only if it is
homogeneous and ${\Pi}/M_S({\Pi})$ is infinite for every finite simple
group $S$.
\end{enumerate}
\end{theorem}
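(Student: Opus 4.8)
The plan is to derive each of the four equivalences from the results of Melnikov collected in \cite[chapter 8]{riza}, since the notion of homogeneity has been defined precisely so as to match his embedding-problem characterization of subgroups of free profinite groups.

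For item (1), the essential point is that homogeneity --- the solvability of every embedding problem (\ref{defhomo}) in which $H=\Ker(\alpha)$ is minimal normal and contained in $M(\Gamma)$ --- is exactly the property Melnikov isolates to characterize accessible subgroups. One direction is comparatively soft: a free profinite group of countable rank solves every finite embedding problem, in particular the restricted ones, and (by a lemma of Melnikov's theory) this solvability passes to the terms of an accessible chain (\ref{defacceq}), so an accessible subgroup of such a group is homogeneous. The substantive converse --- homogeneous implies accessible in a countably generated free group --- I would obtain by using second countability to enumerate the relevant finite embedding problems and solving them along an inverse system, so that $\Pi$ is recovered as the intersection of an accessible chain inside a free profinite group of countable rank. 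This is the content of \cite[Theorem 8.5.4]{riza}.

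Items (2), (3), and (4) refine item (1) by detecting, through the functors $M_l$ (for item (2)) and $M_S$ (for items (3) and (4)), when the accessible embedding supplied by (1) can be upgraded to a normal, characteristic, or surjective one. The quotients $\Pi/M_l(\Pi)$ and $\Pi/M_S(\Pi)$ record the finite-simple obstructions to such an upgrade: a nontrivial finite value would pin $\Pi$ at a bounded depth of the normal series and so preclude it from being normal (resp. characteristic) in any free overgroup. For item (2), where only the cyclic simple groups of prime order intervene, I would combine \cite[Corollary 8.6.4]{riza} with \cite[Theorem 8.6.11]{riza}; for item (3) I would invoke \cite[Theorem 8.10.3]{riza}, which handles all finite simple $S$ simultaneously; and for item (4), the classical characterization of free profinite groups of countable rank, I would combine \cite[Theorem 8.10.3]{riza} with \cite[Theorem 8.10.2]{riza}, the latter strengthening the dichotomy ``trivial or infinite'' to the strict ``infinite'' that forces surjectivity onto arbitrarily large powers of every simple group.

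The main obstacle throughout is the forward direction of item (1): the inverse-limit construction that simultaneously produces the ambient free group of countable rank and the accessible chain realizing $\Pi$. The conditions on $M_l(\Pi)$ and $M_S(\Pi)$ in items (2)--(4) then serve to remove exactly the obstructions distinguishing the three flavours of subgroup, and verifying that these functorial conditions are both necessary and sufficient is the delicate bookkeeping that Melnikov's theorems package.
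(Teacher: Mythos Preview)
Your proposal is correct and matches the paper's treatment exactly: the paper does not give a proof of this theorem but simply records that items (1)--(4) follow from \cite[Theorem 8.5.4]{riza}, \cite[Corollary 8.6.4 and Theorem 8.6.11]{riza}, \cite[Theorem 8.10.3]{riza}, and \cite[Theorems 8.10.3 and 8.10.2]{riza}, respectively --- precisely the citations you identify. Your added heuristic commentary on why these references suffice is a bonus, not a deviation.
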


We shall also need the following

\begin{lemma}\label{pgplem}
\label{p-group}
Let $P$ be a pro-$p$ group and $\mathcal{G}$ a
finite quotient of $P$ such that its minimal number of generators
$d(\cag)$ is less than the minimal number of generators $d(P)$ of $P$. Then
$\cag\times \bbz/p\bbz$ is a quotient of $P$.
\end{lemma}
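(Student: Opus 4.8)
The plan is to build the desired quotient as the image of a combined map. Write $\pi\colon P\twoheadrightarrow\cag$ for the given epimorphism, and look for a continuous homomorphism $\chi\colon P\to\bbz/p\bbz$ such that $(\pi,\chi)\colon P\to\cag\times\bbz/p\bbz$ is surjective. Since $P$ is pro-$p$, such a $\chi$ is the same datum as a continuous $\bbf_p$-linear functional on the Frattini quotient $P/\Phi(P)$ (where $\Phi(P)=\overline{P^{p}[P,P]}$), equivalently an element of $\Hom(P,\bbz/p\bbz)=H^{1}(P,\bbf_p)$; the numerical hypothesis $d(\cag)<d(P)$ will be fed in through the identification $d(P)=\dim_{\bbf_p}H^{1}(P,\bbf_p)$ and the analogous statement for $\cag$.

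First I would isolate the elementary surjectivity criterion. Given any $\chi$, the image $H:=(\pi,\chi)(P)$ is a closed subgroup of $\cag\times\bbz/p\bbz$ whose first projection is all of $\cag$. Hence $H\cap(\{1\}\times\bbz/p\bbz)$ is a subgroup of the cyclic group $\bbz/p\bbz$ of prime order, so it is either everything, in which case $H=\cag\times\bbz/p\bbz$ and we are done, or trivial, in which case the first projection restricts to an isomorphism $H\to\cag$ and exhibits $H$ as the graph of a homomorphism $\psi\colon\cag\to\bbz/p\bbz$ with $\chi=\psi\circ\pi$. Thus $(\pi,\chi)$ is surjective \emph{if and only if} $\chi$ does not factor through $\pi$.

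It then remains to produce a $\chi\in\Hom(P,\bbz/p\bbz)$ that does not factor through $\pi$. The point is that any homomorphism from $P$ to the abelian exponent-$p$ group $\bbz/p\bbz$ factors through $P/\Phi(P)$, and those factoring through $\pi$ are precisely the ones factoring through the induced surjection $\bar\pi\colon P/\Phi(P)\twoheadrightarrow\cag/\Phi(\cag)$. Because $d(\cag)<d(P)$, the map $\bar\pi$ cannot be injective: otherwise $P$ would be topologically generated by $d(\cag)$ elements lifting a basis of $\cag/\Phi(\cag)$, forcing $d(P)\le d(\cag)$. So $\ker\bar\pi$ contains a nonzero vector $u$, and since continuous characters of the profinite $\bbf_p$-vector space $P/\Phi(P)$ separate points, there is a continuous functional $\chi$ with $\chi(u)\neq 0$. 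Such a $\chi$ vanishes nowhere needed: it cannot factor through $\bar\pi$, hence not through $\pi$, and so $(\pi,\chi)$ is the required epimorphism onto $\cag\times\bbz/p\bbz$.

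The argument is short, so there is no single deep step; the only place needing care is the bookkeeping at the Frattini level, especially in the possibly infinitely generated case. I would be careful that ``not factoring through $\pi$'' is genuinely detected on $P/\Phi(P)$ (which holds because the target has exponent $p$ and is abelian), and that the separating functional is honestly continuous rather than merely $\bbf_p$-linear (which is automatic once it is produced as a continuous character of the profinite vector space $P/\Phi(P)$). Avoiding any claim about dimensions of duals of infinite-dimensional spaces, and instead arguing directly via the noninjectivity of $\bar\pi$ and the separation of points, keeps the proof valid whether $d(P)$ is finite or infinite.
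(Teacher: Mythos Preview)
Your proof is correct and follows essentially the same idea as the paper's: the paper picks a ``generator $t$ of $P$ lying in $\Ker\pi$'' and an epimorphism $\eta\colon P\to\bbz/p\bbz$ with $\eta(t)\neq 1$, then asserts $P/(\Ker\pi\cap\Ker\eta)\cong\cag\times\bbz/p\bbz$. Your Frattini-quotient formulation simply makes explicit both why such an element exists (non-injectivity of $\bar\pi$) and why the combined map $(\pi,\chi)$ is surjective (the graph/factoring dichotomy), details the paper leaves implicit.
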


\begin{proof}
Let $t$ be a generator of $P$ which is in the kernel $K$ of the
natural epimorphism of $P$ onto $\cag$ and
$\eta:P\longrightarrow\bbz/p\bbz$ an epimorphism onto a group $\bbz/p\bbz$
of order $p$ such that $\eta(t)\neq 1$. Then $P/(K\cap\Ker(\eta))\cong
\cag\times\bbz/p\bbz$ as needed.
\end{proof}

\subsubsection{} Our key result is the following
theorem.

\begin{theorem}\label{homogeneous}
Let $\pi_1(C)$ be the algebraic fundamental group of a smooth
connected affine curve defined over an algebraically closed field of
characteristic $p>0$ of countable cardinality. Let $N$ be a
normal subgroup of $\pi_1(C)$ such that a
Sylow-$p$ subgroup $M_p$ of the quotient group $M=\pi_1(C)/N$ is
infinitely generated. Then $N$ is homogeneous.
\end{theorem}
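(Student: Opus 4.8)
The plan is to verify the defining property of homogeneity for $N$ directly: given finite groups $\Gamma$ and $G$, an epimorphism $\psi\colon N\twoheadrightarrow G$, and an epimorphism $\alpha\colon\Gamma\twoheadrightarrow G$ whose kernel $H=\Ker(\alpha)$ is minimal normal in $\Gamma$ and satisfies $H\le M(\Gamma)$, I must produce an epimorphism $\rho\colon N\twoheadrightarrow\Gamma$ with $\alpha\circ\rho=\psi$. First I would apply Lemma \ref{split} to reduce to a split embedding problem, so that $\Gamma=H\rtimes G$ with $\alpha$ the projection. Next, since $\Gamma$ is finite, $\psi$ factors through a finite quotient of $N$, so the associated $G$-cover is already defined over a finite subcover $C'\to C$ of the (pro-)cover $\tilde C\to C$ attached to $N$. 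Replacing $C$ by $C'$, I may assume that $\psi$ is the restriction to $N$ of an epimorphism $\phi\colon\pi_1(C)\twoheadrightarrow G$. This replacement is harmless: $N$ remains normal in $\pi_1(C')$, and since $[\pi_1(C):\pi_1(C')]<\infty$, a Sylow $p$-subgroup of the open subgroup $M'=\pi_1(C')/N$ of $M$ is again infinitely generated (by the \textsc{Nielsen--Schreier} formula for pro-$p$ groups), so all hypotheses persist.

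With $\phi$ available, the task becomes to find a solution $\tilde\rho\colon\pi_1(C)\to\Gamma$ of the corresponding split problem over $\pi_1(C)$ whose restriction $\rho=\tilde\rho|_N$ is \emph{surjective}. Because $N\normal\pi_1(C)$, the image $\rho(N)$ is normal in $\Gamma$ and maps onto $G$; hence $\rho(N)\cap H$ is normal in $\Gamma$ and, by minimal normality, equals either $1$ or $H$. Thus there is a clean dichotomy: either $\rho(N)=\Gamma$, in which case we are done, or $\rho(N)$ is a complement to $H$, in which case the $\Gamma$-cover of $C$ becomes disconnected after pullback to $\tilde C$. Realizing $\Gamma$ over a finite level is the comparatively easy part. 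When $H$ is quasi-$p$ (in particular when $H$ is an elementary abelian $p$-group, or $H=S^{n}$ with $p\mid|S|$), the lifted problem is a finite quasi-$p$ embedding problem for $\pi_1(C)$ and is properly solvable by \textsc{Pop}'s theorem \cite{po}; the prime-to-$p$ structure of $H$ is supplied tamely over a high enough level $C_i$, whose tame rank grows without bound up the tower. The abundance of wild $p$-covers enters here as well: \textsc{Artin--Schreier} covers of $\bbp^1$ of arbitrarily high genus, wildly ramified at a single point, produce inside the tower over $\tilde C$ infinitely many independent $\bbz/p$-covers, and these constitute the ``family of finite $p$-groups related to $N$'' that Section 3 assembles and that Lemma \ref{pgplem} lets one enlarge at will.

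The crux --- and the step I expect to be the main obstacle --- is to guarantee connectivity over $\tilde C$, that is, to rule out the complement case and force $\rho(N)=\Gamma$. This is exactly where the hypothesis that $M_p$ is infinitely generated is indispensable. It ensures that the supply of $G$-equivariant $\bbz/p$-quotients of $\Ker(\psi)$ coming from wild covers of $\tilde C$ is infinite, so that after solving the problem at a finite level $C_i$ one may still perturb the solution by a $p$-cover that is detected only up the tower, tying the kernel $H$ into $N$ and thereby making the $H$-part of $\rho$ nonzero on $N$. The conditions that $H$ be minimal normal and $H\le M(\Gamma)$ are precisely what make this perturbation legitimate: they prevent $\Gamma$ from acquiring a simple quotient not already visible in $G$, so that enforcing connectivity never conflicts with the prescribed map $\phi$ down to $G$. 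I would organize the final argument as a compatible system of finite embedding problems over the subcovers $C_i$ of $\tilde C$, solved by \textsc{Pop}'s theorem and by the tame/wild covers above, using the infinite $p$-rank to keep the chosen solutions connected; passing to the resulting finite quotient of $N$ yields the desired $\rho\colon N\twoheadrightarrow\Gamma$. As this produces a proper solution for every embedding problem of the required type, $N$ is homogeneous.
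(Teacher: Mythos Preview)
Your preliminary reductions --- Lemma \ref{split}, the d\'evissage of Remark \ref{remdevi}, and \textsc{Pop}'s theorem for the quasi-$p$ kernel --- agree with the paper. But you have inverted the difficulty of the two remaining steps, and this leads to a genuine gap. The step you call ``the crux'', namely forcing $\rho(N)=\Gamma$, is in fact automatic: once a proper solution $\tilde\rho:\pi_1(T')\twoheadrightarrow\Gamma$ exists at any level with $N\normal\pi_1(T')$, the image $\tilde\rho(N)$ is normal in $\Gamma$ and satisfies $\tilde\rho(N)\cdot H=\Gamma$; since $H\le M(\Gamma)$, this gives $\tilde\rho(N)\cdot M(\Gamma)=\Gamma$, and a normal subgroup with that property must equal $\Gamma$ (\cite[Proposition 8.3.6]{riza}). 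So the ``complement case'' is impossible, and your proposed perturbation by $p$-covers --- which in any case could not ``tie a prime-to-$p$ kernel $H$ into $N$'' --- is unnecessary. This is also not where the hypothesis that $M_p$ is infinitely generated enters.

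The actual obstacle is the step you dismiss as ``comparatively easy'': producing a proper solution of the split problem $(\alpha',\varphi'|_{\pi_1(T')})$ at some level $T'$ with $N\subset\pi_1(T')$, when $H'$ has order prime to $p$. Your assertion that ``the prime-to-$p$ structure of $H$ is supplied tamely over a high enough level $C_i$'' is not a proof: the prescribed $G'$-cover is in general wild, so it does not factor through any prime-to-$p$ quotient of $\pi_1(C_i)$, and large tame rank by itself does not solve a split embedding problem lying over a wild cover. The paper supplies this step via \textsc{Kumar}'s patching (Proposition \ref{ku6.3}): one patches the given $G'$-cover over a $p$-cover $U_p\to U_{\Upsilon}$ with an $H'$-cover of a high-genus \textsc{Artin--Schreier} curve $Y\to\bbp^1$, obtains an irreducible $\Gamma'$-cover over $\spec(k[[t]])$, and then passes to the generic fibre and specializes (Lemma \ref{lemku6.4}, Proposition \ref{ku6.6}). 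The output is the required $\Gamma'$-cover over a curve $T'$ which is a $(\cag\times\bbz/p\bbz)$-\textsc{Galois} cover of $U_{\Upsilon}$. The infinite generation of $M_p$ is used precisely here (Remark \ref{bigprk} and Lemma \ref{pgplem}) to guarantee that $\cag\times\bbz/p\bbz$ is again a quotient of $M_p$, so that $N\subset\pi_1(T')$ and the group-theoretic argument above applies.
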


\subsection{Changing the embedding problem}\label{chep}
It follows from the definition of homogeneous subgroup that in order to prove
Theorem \ref{homogeneous} it suffices to show that given finite
groups $\Gamma$ and $G$ the embedding problem $\cae$ defined by
\begin{equation}\label{epn}
\xymatrix { &N \ar@{>>}[d]^{\psi}\\
\Gamma \ar@{>>} [r]^{\alpha}&G.}
\end{equation}
admits a proper solution under the additional hypotheses that
$H:=\Ker(\alpha)$ is minimal normal and $H\leq M(\Gamma)$. The existence of a proper solution for this embedding problem will be given in Section 4.

The next lemma is an extract of Lemma 2.4 and Example 2.5 (a) in
\cite{hast}.

\begin{lemma}\label{split}
Given a projective profinite group $\Pi$ and a finite embedding
problem
\begin{equation}\label{eppi}
\xymatrix { &\Pi \ar@{>>}[d]^{\psi}\\
\Gamma \ar@{>>} [r]^{\alpha}&G.}
\end{equation}
There exists a finite split embedding problem
\begin{equation}\label{eppi'}
\xymatrix { &\Pi \ar@{>>}[d]^{\psi'}\\
\Gamma' \ar@{>>} [r]^{\alpha'}&G'.}
\end{equation}
such that any proper solution of (\ref{eppi'}) induces a  proper
solution of  (\ref{eppi}). Moreover, if the kernel $H$ of $\alpha$
is of order prime to $p$, then  so is the kernel $H'$ of
$\alpha'$.
\end{lemma}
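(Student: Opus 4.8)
The plan is to construct the required split embedding problem by a fiber product over $G$, invoking projectivity only to obtain a lift. Since $\Pi$ is projective, the embedding problem (\ref{eppi}) admits a weak solution, i.e.\ a continuous homomorphism $\bar\psi\colon\Pi\to\Gamma$ with $\alpha\circ\bar\psi=\psi$; this is exactly the defining property of projective profinite groups. The only defect of $\bar\psi$ is that it need not be surjective, so one cannot simply take $\Gamma$ as the base of a split problem with $\bar\psi$ as its vertical map. The remedy is to replace $\Gamma$ by the finite subgroup $\Gamma_0:=\bar\psi(\Pi)\le\Gamma$, which \emph{does} carry a surjection from $\Pi$, and to observe that $\alpha$ restricts to a surjection $\beta:=\alpha|_{\Gamma_0}\colon\Gamma_0\twoheadrightarrow G$ because $\alpha(\Gamma_0)=\psi(\Pi)=G$.

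Next I would form the fiber product
\[
\Gamma':=\Gamma\times_{G}\Gamma_0=\{(a,c)\in\Gamma\times\Gamma_0:\alpha(a)=\beta(c)\},
\]
equivalently $\Gamma'\cong H\rtimes\Gamma_0$ with $\Gamma_0$ acting by conjugation inside $\Gamma$, together with its two projections $q_1\colon\Gamma'\to\Gamma$ and $q_2\colon\Gamma'\to\Gamma_0$. Both are surjective because $\alpha$ and $\beta$ are, and $\Ker(q_2)=\{(a,1):a\in\Ker(\alpha)\}\cong H$. The crucial point is that $q_2$ is \emph{split}: since $\Gamma_0\le\Gamma$ and $\beta=\alpha|_{\Gamma_0}$, the diagonal $c\mapsto(c,c)$ is a well-defined homomorphic section of $q_2$. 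Thus the diagram
\[
\xymatrix{ &\Pi\ar@{>>}[d]^{\bar\psi}\\ \Gamma'\ar@{>>}[r]^-{q_2}&\Gamma_0}
\]
with $G'=\Gamma_0$, $\alpha'=q_2$ and $\psi'=\bar\psi$ is a finite split embedding problem whose kernel $H'=\Ker(q_2)\cong H$ has the same order as $H$; in particular $H'$ is of order prime to $p$ whenever $H$ is, which settles the last clause.

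It remains to check that a proper solution transfers. Given a proper solution $\rho\colon\Pi\twoheadrightarrow\Gamma'$ of the split problem, so that $q_2\circ\rho=\bar\psi$, I would set $\gamma:=q_1\circ\rho\colon\Pi\to\Gamma$. Then $\gamma$ is surjective as a composite of surjections, and, using the identity $\alpha\circ q_1=\beta\circ q_2$ valid on $\Gamma'$,
\[
\alpha\circ\gamma=\alpha\circ q_1\circ\rho=\beta\circ q_2\circ\rho=\beta\circ\bar\psi=\alpha\circ\bar\psi=\psi,
\]
so $\gamma$ is a proper solution of (\ref{eppi}). The one genuinely delicate point, and the reason the fiber product is taken over $G$ with base $\Gamma_0$ rather than over $\Gamma$ itself, is to reconcile three demands simultaneously: the vertical map must be an epimorphism (forcing the base to be $\bar\psi(\Pi)$), the bottom map must split (which works precisely because $\Gamma_0\le\Gamma$ supplies the diagonal section), and the solution must push forward to the original (guaranteed by $\alpha q_1=\beta q_2$). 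Everything else is a routine verification in finite groups.
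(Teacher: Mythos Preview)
Your proof is correct and follows essentially the same route as the paper's: use projectivity to obtain a weak solution $\bar\psi$, set $G'=\bar\psi(\Pi)$, form the pullback $\Gamma'=\Gamma\times_G G'$, observe it splits via the diagonal, and push a proper solution forward along the first projection. The one small difference is in the final clause: you note directly that $H'=\Ker(\alpha')\cong H$, so the prime-to-$p$ condition is automatic, whereas the paper instead remarks that one can always factor $\alpha'$ through the quotient of $H'$ by the normal closure of a Sylow $p$-subgroup of $H'$; your observation is sharper and makes that extra step unnecessary.
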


\begin{proof} Since $\Pi$ is projective there is a weak solution
$\psi':\Pi \longrightarrow \Gamma$ of (\ref{eppi}). Let
$G':=\psi'(\Pi)$. Let $\Gamma':=\Gamma\times_GG'$ be the pullback.
Then we have the following commutative diagram
\begin{equation}\label{split1}
\xymatrix {
&\Pi \ar@{>>}[d]_{\psi'}\ar@{>>}@/^1pc/[dd]^{\psi}\\
\Gamma' \ar@{>>} [r]^{\alpha'}\ar[d]&G'\ar[d]\\
 \Gamma \ar@{>>}[r]^{\alpha}&G.
}
\end{equation}
Since $\Gamma'$ is a subgroup of a direct product
$\Gamma\times G'$, the diagram (\ref{split1}) yields the following
split embedding problem
\begin{equation}\label{split2}
\xymatrix { &\Pi \ar@{>>}[d]^{\psi'}\\
\Gamma' \ar@{>>} [r]^{\alpha'}&G'.}
\end{equation}

Given a proper solution $\lambda': \Pi \twoheadrightarrow\Gamma'$ of
 (\ref{split2}), it is automatic that $\lambda:=q\circ\lambda'$ is a
 proper solution of (\ref{epn}), where
 $q:\Gamma'\twoheadrightarrow\Gamma$ is the projection to the first
 coordinate.

The last statement follows from the fact
 that it is always possible to factor  $\alpha'$ through  the quotient of  $H'$ by the normal
 closure of a Sylow $p$-subgroup of $H'$.
\end{proof}

\begin{remark}\label{suffsp}
It follows from Lemma \ref{split} that in order to properly solve
the embedding problem (\ref{epn}), it suffices to properly solve
the associated finite split embedding problem
\begin{equation}\label{epn'}
\xymatrix { &N \ar@{>>}[d]^{\psi'}\\
\Gamma' \ar@{>>} [r]^{\alpha'}&G'.}
\end{equation}
(\emph{Warning}: $H'$ is not a minimal normal subgroup of
$\Gamma'$).
\end{remark}

\begin{remark}\label{remdevi}
By \cite[Lemma 8.3.8]{riza} there exists  an open  normal subgroup
$R$ of $\pi_1(C)$ containing $N$ and an epimorphism
$\varphi:R\twoheadrightarrow G$ such that $\varphi_{|N}=\psi$.
Note that every $R$ is the fundamental group of a finite \'etale
cover $C'$ of $C$. So after replacing $\pi_1(C)$ by $R=\pi_1(C')$,
if necessary,  we may assume the existence of the following
commutative diagram:
\begin{equation}\label{epch}
\xymatrix {& N\ \ar@{>>}[d]^{\psi}\ar@{>->}[r]&\pi_1(C)\ar@{>>}[dl]^{\varphi}\\
              \Gamma \ar@{>>} [r]^{\alpha}&G.&  }
\end{equation}
It follows from (\ref{epch}) that for every subgroup $N\leq
L\leq \pi_1(C)$, the restriction $\varphi_{|L}$ of $\varphi$ to $L$
is an epimorphism.
\end{remark}

\section{A family of $p$-groups}\label{fampcov}

\emph{Terminology.}  For any ring $R$, let $\text{frac}(R)$ be its total ring of fractions. If $R$ is a domain and $R\subset S$ is an extension of rings, then we say that $S$ is \emph{generically separable} as an $R$-algebra if $\text{frac}(S)$ is a separable $\text{frac}(R)$-algebra and if no non-zero element of $R$ becomes a zero-divisor of $S$. For any finite group $G$, a $G$-\textsc{Galois} $R$-algebra consists of a finite separable $R$-algebra $S$ together with a group homomorphism $\rho:G\to\aut_R(S)$ with respect to which $G$ acts simply transitively on the generic geometric fiber of $\spec(S)\to\spec(R)$. We say that a morphism of schemes $\phi:Y\to X$ is \emph{generically separable} (resp. $G$-\textsc{Galois} with respect to a homomorphism $\rho:G\to\aut_X(Y)$) if $X$ can be covered by affine open subsets $U=\spec(R)$ such that the ring extension $R\subset\cao(\phi^{-1}(U))$ have the corresponding property. A morphism $\phi:Y\to X$ which is finite and generically separable (resp. finite and $G$-\textsc{Galois}) will be called a \emph{cover} (resp. $G$-\textsc{Galois} \emph{cover}).

\subsection{A geometric translation}
The results of this section reduce our problem to a problem in algebraic geometry.  Specifically, given a $G$-\textsc{Galois} cover of a curve $C$ arising from an \emph{a priori} specified epimorphism $\varphi: \pi_1(C) \twoheadrightarrow G$ such that $\varphi|_N$ is still an epimorphism, we want to find a $\Gamma$-\textsc{Galois} cover of $C$ that dominates the given $G$-cover.  This $\Gamma$-\textsc{Galois} cover will correspond to an epimorphism $\lambda: \pi_1(C) \twoheadrightarrow \Gamma$ and we must show that $\lambda|_N$ is an epimorphism.

One of the crucial points in the construction of the aforementioned $\Gamma$-\textsc{Galois} cover is to produce finite \'etale covers of $C$ which admit as subcovers finite \'etale \textsc{Galois} covers whose \textsc{Galois} groups are  $p$-groups. These finite $p$-groups will not be arbitrary, in fact they are chosen as finite quotients of a \textsc{Sylow}-$p$ subgroup $M_p$ of $M:=\pi_1(C)/N$. In this section we focus on these groups by constructing the covers they determine. This is done in the language of function fields (see diagram (\ref{pdiag1})). For a complete description of our global strategy, see Subsection \ref{subsecstra}.

\medskip

Let $\ov{C}$ be a smooth compactification of $C$ and
$S:=\ov{C}\setminus C$. Let $K:=k(C)$ be the function field of $C$ and
$K_{\ur,S}$ the maximal \textsc{Galois} extension of $K$ (inside some
algebraic closure of $K$) which is unramified over the points of
$C$. The fundamental group $\pi_1(C)$ can also be viewed as the
\textsc{Galois} group $\gal(K_{\ur,S}/K)$.
\medskip

\textit{Notation.} For any smooth connected curve $\cac$ defined over
$k$, we will denote by $k(\cac)$ its function field. Given a profinite
group $\Pi$ we will denote by $d(\Pi)$ the minimal number of
generators of $\Pi$.
\medskip

We start by observing that since $N$ is a closed subgroup of $\pi_1(C)$, by infinite
\textsc{Galois} theory there exists a unique subextension $K_M/K$ of
$K_{\ur,S}/K$ such that $K_M$ is field fixed by $N$. Moreover, since
$N$ is normal in $\pi_1(C)$, then $K_M/K$ is \textsc{Galois} with
$\gal(K_M/K)=M$. Similarly, since $M_p$ is closed in $M$, denote by
$K_p$ the subfield of $K_M$ fixed by $M_p$.

The \textsc{Sylow}-$p$ subgroup $M_p$ of $M$ can be described via the
following projective limit $M_p=\varprojlim\Upsilon$, where $\Upsilon$
runs through the open subgroups of $M$ containing $M_p$. Observe that
since $M=\gal(K_M/K)$, each $\Upsilon$ is equal to $\gal(K_M/k(U_{\Upsilon}))$,
where $U_{\Upsilon}$ is a finite \'etale cover of $C$. Alternatively,
$M_p=\varprojlim\Upsilon^{(p)}$, where
$\Upsilon^{(p)}$ denotes the maximal pro-$p$ quotient of
$\Upsilon$. Let $K_{U,p}/k(U_{\Upsilon})$ be the subextension of $K_M/k(U_{\Upsilon})$ such
that $\gal(K_{U,p}/k(U_{\Upsilon}))=\Upsilon^{(p)}$. Furthermore, any finite
quotient $\cag$ of $M_p$ is indeed
a finite quotient of some $\Upsilon^{(p)}$. Hence, $\cag$ will be
equal to $\gal(k(U_p)/k(U_{\Upsilon}))$ for some finite \'etale \textsc{Galois}
$p$-cover $U_p$ of $U_{\Upsilon}$. The following diagram summarizes the
situation:
\begin{equation}\label{pdiag1}
\xymatrix{
K_{\ur,S}\ar@{-}[dr]^N\ar@{-}[dddddrrr]_{\pi_1(C)}\\
&K_M\ar@{=}[r]\ar@{-}[ddddrr]^M&K_M\ar@{=}[r]\ar@{-}[dddr]_{\Upsilon}&K_M\ar@{-}[d]^{M_p}\ar@{-}[drrr]\\
&&&K_p\ar^@{-}[dd]&&&K_{U,p}\ar@{-}[d]\ar@{-}[ddlll]_{\Upsilon^{(p)}}\\
&&&&&&k(U_p)\ar@{-}[dlll]^{\cag}\\
&&&k(U_{\Upsilon})\ar@{-}[d]\\
&&&k(C).
}
\end{equation}

\subsection{Description of the family}\label{descfam}

We consider the family of finite groups which are quotients of
$M_p$. By (\ref{pdiag1}) they are the \textsc{Galois} groups
$\gal(k(U_p)/k(U_{\Upsilon}))$ of finite \'etale \textsc{Galois} $p$-cover $U_p$
of a finite \'etale cover $U_{\Upsilon}$ of $C$ such that $\gal(K_M/k(U_{\Upsilon}))=\Upsilon$.

\begin{remark}\label{bigprk}
Note that $d(M_p)\ge d(\Upsilon^{(p)})\ge d(\cag)$. Since $M_p$ is
infinitely generated, after eventually taking a
$\Upsilon'\supset\Upsilon$, and repeating the same previous
construction for $\Upsilon'$, we may assume that
$d(\cag)<d(\Upsilon^{(p)})$.
\end{remark}

Observe that the diagram (\ref{pdiag1}) implies the following
inclusions between fundamental groups and the profinite group $N$ :
\begin{equation}\label{fundinc}
\pi_1(C)\supset\pi_1(U_{\Upsilon})\supset\pi_1(U_p)\supset N.
\end{equation}

\subsection{Explaining the strategy}\label{subsecstra}
We start with the embedding
problem (\ref{epn}). Next we deduce from it the associated split
embedding problem (\ref{epn'}). Then the \emph{d\'evissage} of Remark
\ref{remdevi} and the inclusions (\ref{fundinc}) produce the following
diagram of split embedding problems
\begin{equation}\label{finaldiag1}
\xymatrix {& N\
  \ar@{>>}[d]^{\psi}\ar@{>->}[r]&\pi_1(U_p)\ar@{>>}[dl]_{\varphi'_1}
\ \ar@{>->}[r]&\pi_1(C)\ar@{->>}[dll]_{\varphi'}\\
              \Gamma' \ar@{>>} [r]^{\alpha'}&G',& }
\end{equation}
where $\varphi'_1:=\varphi'_{|\pi_1(U_p)}$. In Section \ref{secsol}, we produce a
curve $U_p$ (which will be actually denoted as $T'$ in the sequel)
such that the split embedding problem given by the pair of maps
$(\alpha',\varphi'_1)$ admits a proper solution. Then by Lemma
\ref{split} this will induce a proper solution to the embedding
problem defined by the pair of maps $(\alpha,\varphi_{|\pi_1(T')})$. As
a final step, the group theoretic argument of Subsection \ref{gpthar}
will produce from this proper solution, a proper solution to the
original embedding problem (\ref{epn}).

\section{Solving the embedding problem (\ref{epn})}\label{secsol}

%\subsection{Solving the embedding problem (\ref{ept1})}\label{ssecsol}

We have already observed in Section \ref{fampcov} that in order to
obtain a proper solution for the embedding problem (\ref{epn}), it
suffices to produce a finite \'etale \textsc{Galois} $p$-cover $T'\to
U_{\Upsilon}$ such that $\gal(k(T')/k(U_{\Upsilon}))$ is a finite quotient of $M_p$ and that
the embedding problem in (\ref{finaldiag1}) defined by the pair of
maps $(\alpha',\varphi'_1)$ admits a proper solution. Thus, in
Subsection \ref{embT} we use patching techniques to build a curve $T$
over $k[[t]]$ for which we can solve $(\alpha',\varphi'_1)$.   In Subsection
\ref{special} we specialize this solution to obtain a solution over
$k$.  Then in Subsection \ref{gpthar} we go on to use a group
theoretic argument to solve (\ref{epn}). Observe additionally that the proof of the existence of a proper solution of the latter embedding problem will complete the proof of Theorem 2.9. This geometric proof does not require that the field $k$ is countable, it works well for any algebraically closed field $k$ of characteristic $p>0$.

\begin{hypothesis}\label{hypB}
From this point through Subsection \ref{gpthar} our goal is to solve the
embedding problem (\ref{epn}).  It follows from  Lemma \ref{split}
that it suffices to solve split embedding problem (\ref{epn'}).
So we suppose we are already in the latter case. We also assume that $H$ is of order prime
to $p$. As already mentioned above this implies that the kernel $H'$ of the latter embedding problem also has order prime to $p$.
\end{hypothesis}

\begin{remark}
We actually do not need the hypothesis that $H$ has order prime to $p$ for Subsection \ref{gpthar}. Moreover the case complementary
to it  is a consequence of the fact that every finite quasi
$p$-embedding problem of the algebraic fundamental group of a
smooth connected affine curve $C$ defined over an algebraically
closed field of characteristic $p>0$ is properly solvable
(\emph{cf.} \cite[Corollary 4.6]{hacr} and \cite[Theorem B]{po}).
The argument is given in Subsection \ref{p1sec}.
\end{remark}

\begin{remark}The main idea of our construction is the following. We start on one side with a $G'$-\'etale \textsc{Galois} cover of one the curves $U_p$ described above. Recall that these curves satisfy the property that there exists an \'etale \textsc{Galois} cover $U_p/U_{\Upsilon}$ whose \textsc{Galois} group equals a finite quotient of $M_p$. On the other side we consider a $H'$-\'etale \textsc{Galois} cover of an \textsc{Artin-Schreier} cover of $\bbp^1_k$. Remember that our goal is to solve the split embedding problem (\ref{epn'}) and that $H'$ is exactly the kernel of this embedding problem. After deforming these covers they are patched so as to produce a $\Gamma'$-\textsc{Galois} cover $W$ of a curve $T$ over $\spec(k[[t]])$. Taking the generic fiber of $W$ and then specializing it, we will obtain a $\Gamma'$-\textsc{Galois} cover of a curve $T'$ defined over $k$ and this $T'$ will be one of the $U_p$'s previously defined.\end{remark}

\subsection{Solving the embedding problem for some $T$.} \label{embT}
\subsubsection{Description of the ${G'}$-cover}\label{gcov}

Let $\cag$ be a non-trivial finite quotient of $M_p$ and let $\phi_{U_p}:U_p\to U_{\Upsilon}$ be the
finite \'etale
\textsc{Galois} $p$-cover of $U_{\Upsilon}$ such that $\gal(U_p/U_{\Upsilon})=\cag$, where
$U_{\Upsilon}$ is a finite \'etale cover of $C$ satisfying $\gal(K_M/k(U_{\Upsilon}))=\Upsilon$ and $\Upsilon$ is an open subgroup of $M$ containing $M_p$ (\emph{cf.} diagram (\ref{pdiag1}). The inclusion of fundamental groups (\ref{fundinc}) and the argument presented in subsection \ref{subsecstra} show us that there
exists an epimorphism $\varphi_p:\pi_1(U_p)\twoheadrightarrow {G'}$ such that
$\varphi_p=\varphi_{|\pi_1(U_p)}$ (\emph{cf.} (\ref{epch})). This means that
there exists an \'etale \textsc{Galois} cover $V\to U_p$ with
${G'}=\gal(V/U_p)$.

Let $\ov{U}_{\Upsilon}$ be a smooth compactification of $U_{\Upsilon}$. Let $X$ be the
normalization of $\ov{U}_{\Upsilon}$ in $k(U_p)$ and $\phi_X:X\to\ov{U}_{\Upsilon}$ the
normalization morphism. The morphism $\phi_X$ extends $\phi_{U_p}$ to
$X$, hence it is $\cag$-\textsc{Galois} and \'etale over $U_{\Upsilon}$.
Similarly, let $W_X$ be the normalization of $X$ in $k(V)$ and
$\phi_{W_X}:W_X\to X$ the normalization morphism. This is a
${G'}$-\textsc{Galois} cover \'etale over $\phi_X^{-1}(U_{\Upsilon})$.

\subsubsection{Description of the $H'$-cover}\label{hcov}

Let $\phi_Y:Y\to\bbp^1_y$ be an \textsc{Artin-Schreier} cover
totally ramified over $\infty$. Denote its equation by
$z^p-z=f(y)$, where $f\in k[z]$ has degree $\nu$ relatively prime
to $p$. The genus of $Y$ can be arbitrarily high, since it is
equal to $(p-1)(\nu-1)/2$. For a discussion of other cases of
wildly ramified covers with arbitrarily large genus \emph{cf.}
\cite{pr}. Thus, we may suppose that the genus $g(Y)$ of $Y$ is at
least $\max(2,r)$, where $r$ denotes the minimal number of
generators  of $H'$. Since $H'$ has order prime to $p$, it follows
from \cite[Exp. X, Corollaire 3.10]{sga1} that there exists an
\'etale $H'$-\textsc{Galois} cover $\phi_{W_Y}:W_Y\to Y$. We reparametrize $\bbp^1_y$ so that $Y$ is totally ramified over $y=0$. Denote by $s\in Y$ the point lying over $y=0$.

\subsubsection{Descending to $\bbp^1$}

Since $k$ is algebraically closed, then $k(U_{\Upsilon})/k$ has a separating
variable. Moreover, a stronger version of \textsc{Noether}'s
normalization theorem \cite[Corollary 16.18]{ei} implies that
there exists a finite proper $k$-morphism $\phi_{U_{\Upsilon}}:U_{\Upsilon}\to\bba^1_x$
which is furthermore generically separable.  The branch locus of
the morphism will be of codimension 1, so it will be \'etale away
from finitely many points. After a reparametrization, if
necessary, we may assume that none of these points lie over
$x=0$. Moreover, $\phi_{U_{\Upsilon}}$ extends to a finite proper morphism
$\phi_{\ov{U}_{\Upsilon}}:\ov{U}_{\Upsilon}\to\bbp^1_x$. Note that
$\phi_{\ov{U}_{\Upsilon}}^{-1}(\{x=\infty\})=\ov{U}_{\Upsilon}\setminus U_{\Upsilon}=:\cau_{\Upsilon}$. Composing the latter morphism with the $\cag$-\textsc{Galois} cover $\phi_X:X\to\ov{U}_{\Upsilon}$ we obtain 
$\phi'_X:=\phi_X\circ\phi_{\ov{U}_{\Upsilon}}:X\to\bbp^1_x$. This morphism is
\'etale at the points of the set
$\{r_1,\cdots,r_n\}:=\phi_X'^{-1}(\{x=0\})$.

\subsubsection{Patching and deformation}
Now we must patch together and deform the $G'$-\textsc{Galois} cover $\phi_{W_X}:W_X\to X$ and the $H'$-\textsc{Galois} cover $\phi_{W_Y}:W_Y\to Y$. 
Let $R$, resp. $A$, be rings such that
$\spec(R)=X\setminus\phi_X^{\prime{-1}}(\{x=\infty\})$, resp.
$\spec(A)=Y\setminus\phi_Y^{-1}(\{y=\infty\})$. Hence,
$k[x]\subset R$ and $k[y]\subset A$. Let $F:=(R\otimes_k
A\otimes_kk[[t]])/(t-xy)$ and $T^f:=\spec(F)$. Let $T$ be the
\textsc{Zariski} closure of $T^f$ in
$Z:=X\times_{\spec(k)}Y\times_{\spec(k)}\spec(k[[t]])$.

Observe
that under the first, resp. second, projection of $T$ inside $Z$,
$T$ is an $X$-scheme, resp. an $Y$-scheme. Let $W_{XT}$, resp.
$W_{YT}$ be the normalization of an irreducible dominating component of
$W_X\times_XT$, resp. $W_Y\times_YT$.

We next state the items of \cite[Proposition 6.3]{ku} which we will need and give a sketch of their proof. For a more detailed description see \emph{loc. cit.}. We start with the following notation.

\emph{Notation.} Let $\cax$ be a scheme over $\spec(k[[t]])$. We denote by $\tilde{\cax}$ its completion along its closed fiber, \emph{i.e.}, its $(t)$-adic completion. Let $G_1$ be a finite group and $G_2$ a subgroup of $G_1$. Given a $G_2$-\textsc{Galois} cover $Z'\to Z$ of smooth connected curves defined over $k$, it induces a disconnected $G_1$-\textsc{Galois} cover $\text{Ind}^{G_1}_{G_2}(Z')\to Z$. The upper curve consists of as many copies of $Z'$ as the index of $G_2$ in $G_1$. For any scheme $X$ and finite group $\cah$, let $\cah\can(X)$ be the category of generically separable coherent locally free sheaves of $\cao_X$-algebras $S$ endowed with a $\cah$-action which is transitive on the geometric generic fibers of $\spec(S)\to X$.  

\emph{Further notation.} Let $L:=\spec(k[z])$ be an affine line. The $k$-algebra homomorphism $k[[t]][z]\to F$ defined by $z\mapsto x+y$ induces a $\spec(k[[t]])$-morphism $\phi:T^f\to L^*:=L\times_{\spec(k)}\spec(k[[t]])$. Let $\lambda\in L$ be the closed point corresponding to the point $z=0$. Since $L$ is included in $L^*$ as the closed fiber, $\lambda$ can be viewed as a closed point of $L^*$ corresponding to the maximal idea $(z,t)$ in $k[[t]][z]$.  Denote by $\{\tau_1,\cdots,\tau_n\}$ the inverse image $\phi^{-1}(\lambda)$. The closed fiber of $T$ is a reducible curve consisting of $X$ and $n$ copies of $Y$, and each of these copies of $Y$ intersects $X$ at one of the points $\{\tau_1,\cdots,\tau_n\}$.  (This is because the locus of $t=0$ is the same as that of $xy=0$ in $T$, and the locus of $t=0$ and $x+y=0$ in $T$ is the same as that of $x=0$ and $y=0$.) Thus, we can identify the points $\{\tau_1,\cdots,\tau_n\}$ with the points $\{r_1,\cdots,r_n\}$ on the $X$-part of the closed fibre. Notice that on the ``$i$-th copy" of $Y$, the point $\tau_i$ corresponds to $s\in Y$, the unique point lying over $y=0$. Therefore, we actually consider $\{\tau_1,\cdots,\tau_n\}$ to be points lying in $T$. Let $T^0:=T\setminus\{\tau_1,\cdots,\tau_n\}$, $T_X:=T^0\setminus\{x=0\}$ and $T_Y:=T^0\setminus\{y=0\}$. For each $i=1,\cdots,n$, let $\hat{T}_i:=\spec(\hat{\cao}_{T,\tau_i})$. For each $i=1,\cdots,n$, let $\hat{K}_{X,r_i}:=\text{frac}(\hat{\cao}_{X,r_i})$, $\cak_{X,r_i}:=\spec(\hat{K}_{X,r_i}[[t]]\times_{\spec(k[y])}\cao_{Y,s})$, $\hat{K}_{Y,s}:=\text{frac}(\hat{\cao}_{Y,s})$ and $\cak_Y^i:=\spec(\hat{K}_{Y,s}[[t]]\otimes_{\spec(k[x])}\cao_{X,r_i})$.

One of the ingredients of the proposition is the following lemma proved in \cite[Lemma 6.2]{ku} which is analogous to \cite[Corollary 2.2]{ha}.

\begin{lemma}\label{catlem}For any finite group $\cah$, the base change functor
$$\cah\can(T)\to\cah\can(\widetilde{T_X}\cup\widetilde{T_Y})\times_{\cah\can(\bigcup_{i=1}^n
\cak_{X,r_i}\cup\cak_{Y}^i))}\cah\can\left(\bigcup_{i=1}^n\hat{T}_i\right)$$
is an equivalence of categories.
\end{lemma}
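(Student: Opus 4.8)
The plan is to follow the formal-patching strategy of \cite[Corollary 2.2]{ha}, of which this lemma is the stated analogue, realizing $T$ as glued from the thickened open pieces $\widetilde{T_X}\cup\widetilde{T_Y}$ and the formal neighborhoods $\hat{T}_i$ of the nodes, overlapping along the formal annuli $\bigcup_{i=1}^n\cak_{X,r_i}\cup\cak_Y^i$. First I would reduce to the underlying coherent-sheaf statement. An object of $\cah\can(T)$ is a coherent locally free sheaf $\cas$ of $\cao_T$-algebras carrying an $\cah$-action; the algebra multiplication is a morphism $\cas\otimes_{\cao_T}\cas\to\cas$ and each $h\in\cah$ acts by a morphism $\cas\to\cas$. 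Hence, once the base-change functor is known to be an equivalence on the underlying categories of coherent locally free $\cao$-modules, the algebra structure and the $\cah$-action on a reconstructed sheaf are produced by full faithfulness, while generic separability and transitivity of $\cah$ on the geometric generic fibers are conditions on the generic fibers alone and are therefore inherited from any of the pieces. Thus it suffices to prove the equivalence for coherent locally free modules.

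For full faithfulness I would compute $\Hom_T(\cas,\cas')$ as the global sections of the internal hom sheaf $\mathcal{H}om_{\cao_T}(\cas,\cas')$, which is again coherent, and show that its sections fit into the Mayer--Vietoris sequence attached to the covering of $T$ by $T^0$ and the $\hat{T}_i$, refined by the decomposition of the thickened open part into $T_X$ and $T_Y$. Concretely, because $\{\tau_1,\dots,\tau_n\}$ is closed with open complement $T^0$, the square relating $\cao_T$, $\cao_{T^0}$, the $\hat{\cao}_{T,\tau_i}$ and the punctured completions is a formal gluing (Beauville--Laszlo type) square, so a section of a coherent sheaf on $T$ is the same as compatible sections on $T^0$ and on the $\hat{T}_i$ agreeing on the overlaps; a second application over the two branches at each node replaces $T^0$ by $\widetilde{T_X}\cup\widetilde{T_Y}$ and the punctured completions by the annuli $\cak_{X,r_i}\cup\cak_Y^i$. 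This yields the required bijection, hence full faithfulness.

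The substance of the argument is essential surjectivity. Given a sheaf $\cas_0$ on $\widetilde{T_X}\cup\widetilde{T_Y}$, sheaves $\cas_i$ on the $\hat{T}_i$, and gluing isomorphisms over the annuli, I would first patch this data into a single coherent sheaf $\tilde{\cas}$ on the formal completion $\tilde{T}$ of $T$ along its closed fiber: by construction that fiber is $X$ with $n$ copies of $Y$ attached at the $\tau_i$, so $\widetilde{T_X}$, $\widetilde{T_Y}$ and the $\hat{T}_i$ cover $\tilde{T}$ and overlap exactly along the formal annuli, and the gluing is the standard formal Mayer--Vietoris patching of \cite[Corollary 2.2]{ha}. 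I would then algebraize $\tilde{\cas}$. Here it is crucial that $T$ is the \textsc{Zariski} closure of $T^f$ inside $Z=X\times_{\spec(k)}Y\times_{\spec(k)}\spec(k[[t]])$ with $X$ and $Y$ complete curves, so that $T$ is closed in the proper $\spec(k[[t]])$-scheme $Z$ and hence proper over $k[[t]]$. \textsc{Grothendieck}'s formal existence theorem then yields a coherent sheaf $\cas$ on $T$ whose formal completion is $\tilde{\cas}$ and whose restrictions to the pieces recover the given data. Transporting the algebra and $\cah$-structures through the same equivalence, and recording that the genericity conditions survive, completes the proof.

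The main obstacle is exactly this essential-surjectivity step. One must know both that the three families of formal pieces genuinely cover $\tilde{T}$ and meet only along the specified annuli (which rests on the explicit local node geometry $t=xy$, the two branches at $\tau_i$ being cut out by $y=0$ on the $X$-side and by $x=0$ on a $Y$-copy), and that $T$ is proper over $k[[t]]$ so that \textsc{Grothendieck} existence applies. Once algebraization is available, verifying that the reconstructed object lies in $\cah\can(T)$ and maps to the prescribed datum is formal.
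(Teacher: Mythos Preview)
The paper does not supply its own proof of this lemma: it is quoted from \cite[Lemma 6.2]{ku}, with the remark that it is the analogue of \cite[Corollary 2.2]{ha}. Your proposal reconstructs precisely the standard formal-patching argument that underlies both of those references --- reduce from $\cah\can$ to coherent locally free modules, obtain full faithfulness by a Mayer--Vietoris/formal-gluing computation, patch the given data into a coherent sheaf on the $(t)$-adic completion $\tilde T$ using the explicit node geometry $t=xy$, and then algebraize via \textsc{Grothendieck}'s existence theorem. Your emphasis on the properness of $T$ over $\spec(k[[t]])$ (as a closed subscheme of $Z=X\times_kY\times_k\spec(k[[t]])$ with $X,Y$ projective) as the key input for algebraization is exactly right, and the observation that the $\cah$-action, algebra structure, and generic conditions transfer formally once the module-level equivalence is known is the standard reduction. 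So your approach is correct and is, in substance, the same as the one the paper invokes by citation.
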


\begin{proposition}\label{ku6.3}\cite[Proposition 6.3]{ku} Let $\phi_{W_X}:W_X\to X$ be an irreducible normal $G'$-cover which is \'etale over $r_1,\cdots,r_n$. Similarly, take $\phi_{W_Y}:W_Y\to Y$ to be an irreducible normal $H'$-cover which is \'etale over $s$. Let $W_{XT}$, resp. $W_{YT}$, be the normalization of an irreducible dominating component of $W_X\times_XT$, resp. $W_Y\times_YT$. Then there exists an irreducible normal $\Gamma'$-cover $W\to T$ satisfying the following conditions
\begin{enumerate}
\item $W\times_T\widetilde{T_X}=\mathrm{Ind}^{\Gamma'}_{G'}(\widetilde{W_{XT}\times_TT_X})$.
\item $W\times_T\widetilde{T_Y}=\mathrm{Ind}^{\Gamma'}_{H'}(\widetilde{W_{YT}\times_TT_Y})$.
\item $W/H'\cong W_{XT}$ as an \'etale $H'$-cover of $T$.
\end{enumerate}\end{proposition}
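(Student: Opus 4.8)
The plan is to prove the proposition by \emph{formal patching}, using the equivalence of categories of Lemma \ref{catlem} to reduce the construction of the global $\Gamma'$-cover $W\to T$ to the construction of compatible $\Gamma'$-objects on the three pieces $\widetilde{T_X}$, $\widetilde{T_Y}$ and $\hat{T}_i$, together with gluing isomorphisms over the overlap pieces $\cak_{X,r_i}$ and $\cak_Y^i$. The essential structural input is that the embedding problem (\ref{epn'}) is split, so that $H'=\Ker(\alpha')$ is normal in $\Gamma'$ and $\Gamma'=\langle G',H'\rangle$; this generation is exactly what will force irreducibility at the end.

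First I would specify the data on each piece. On $\widetilde{T_X}$ I take the induced cover $\mathrm{Ind}^{\Gamma'}_{G'}(\widetilde{W_{XT}\times_T T_X})$, and on $\widetilde{T_Y}$ the induced cover $\mathrm{Ind}^{\Gamma'}_{H'}(\widetilde{W_{YT}\times_T T_Y})$; on each formal patch $\hat{T}_i$ I use the mock $\Gamma'$-cover, namely the trivial disconnected cover $\Gamma'\times\hat{T}_i$ of $|\Gamma'|$ disjoint copies of $\hat{T}_i$. Each of these is an object of the relevant category $\Gamma'\can$ over its base, since the covers are normal and generically separable. Conditions (1) and (2) will then hold by construction, because the equivalence of Lemma \ref{catlem} guarantees that $W$ restricts to the prescribed objects over $\widetilde{T_X}$ and $\widetilde{T_Y}$.

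Next I must produce the gluing isomorphisms over the overlaps required by the fiber product in Lemma \ref{catlem}. Here the hypotheses that $\phi_{W_X}$ is \'etale over each $r_i$ and $\phi_{W_Y}$ is \'etale over $s$ are crucial: restricted to the formal neighbourhood of the patching point $\tau_i$ (identified with $r_i$ on the $X$-part and with $s$ on the $i$-th copy of $Y$), both induced covers become mock covers, hence each is isomorphic to the trivial cover $\Gamma'\times\hat{T}_i$. Choosing such isomorphisms over $\cak_{X,r_i}$ and $\cak_Y^i$ supplies the patching data, and the fiber-product compatibility is automatic because the object chosen on $\hat{T}_i$ is the common mock cover. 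Applying Lemma \ref{catlem} then yields the desired $\Gamma'$-object $W\to T$.

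It remains to check that $W$ is irreducible and that condition (3) holds, and this is where the real content lies. For irreducibility I would track how the gluing identifies the sheets of the two induced covers: over a patch the $X$-side identifies components differing by an element of $G'$ and the $Y$-side those differing by an element of $H'$, so connectedness of $W$ amounts to transitivity of the global monodromy, which follows precisely from $\Gamma'=\langle G',H'\rangle$ once the finitely many local isomorphisms at $\tau_1,\dots,\tau_n$ are chosen to realize this full generation. For (3), I would compute the $H'$-quotient piecewise: on the $X$-side, $\mathrm{Ind}^{\Gamma'}_{G'}(\,\cdot\,)/H'\cong W_{XT}$ using $H'\backslash\Gamma'\cong G'$, with $W\to W_{XT}$ an unramified $H'$-cover there; on the $Y$-side, $\mathrm{Ind}^{\Gamma'}_{H'}(\,\cdot\,)/H'$ is a trivial $G'$-cover of the base, which matches the restriction of $W_{XT}$ to the $Y$-copies (these map into the \'etale locus of $\phi_X$, so $W_{XT}$ is trivial there), with $W\to W_{XT}$ restricting to the \'etale $H'$-cover $W_{YT}\to T_Y$. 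Gluing gives $W/H'\cong W_{XT}$ as an \'etale $H'$-cover. The main obstacle is the simultaneous bookkeeping at the patching points: arranging the local gluing isomorphisms so that $W$ is connected while the normal subgroup $H'$ still yields the quotient structure of (3).
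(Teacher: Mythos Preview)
Your proposal is correct and follows essentially the same formal patching strategy as the paper: both construct $W$ by applying Lemma \ref{catlem} to the induced covers $\mathrm{Ind}^{\Gamma'}_{G'}$ and $\mathrm{Ind}^{\Gamma'}_{H'}$ on $\widetilde{T_X}$ and $\widetilde{T_Y}$ together with the trivial $\Gamma'$-cover on each $\hat{T}_i$, using the \'etale hypotheses at the $r_i$ and at $s$ to see that everything becomes induced-from-trivial on the overlaps. The one place the paper is slightly slicker is in verifying (3): rather than computing $W/H'$ piecewise and worrying about compatibility of the gluing choices, the paper simply observes that $W/H'$ and $W_{XT}$ restrict to the \emph{same} $G'$-cover on each of the patches $\widetilde{T_X}$, $\widetilde{T_Y}$, $\hat{T}_i$ and then invokes Lemma \ref{catlem} a second time (now with $\cah=G'$) to conclude they are globally isomorphic, which cleanly dissolves the ``simultaneous bookkeeping'' obstacle you flag at the end.
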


\begin{proof}[Sketch of proof] Denote the right hand sides of (1) and (2) in the proposition by $\widetilde{W_X}$ and $\widetilde{W_Y}$, respectively. The structural sheaf of their union $\widetilde{W^0}$ is an object of $\Gamma'\can(\widetilde{T_X}\cup\widetilde{T_Y})$. Using the fact that $\phi_{W_X}$ is \'etale over each $r_i$ and $\phi_{W_Y}$ is \'etale over $s$, one concludes that the restriction of $\widetilde{W^0}$ to $\bigcup_{i=1}^n(\cak_{X,r_i}\cap\cak_Y^i)$ is a $\Gamma'$-cover induced by the trivial cover. Denote by $\hat{W}_i$ the $\Gamma'$-cover of $\hat{T}_i$ induced by the trivial cover. The structural sheaf of $\hat{W}:=\bigcup_{i=1}^n\hat{W}_i$ is an object of $\Gamma'\can(\bigcup_{i=1}^n\hat{T}_i)$. Moreover, the restriction of $\hat{W}$ to $\bigcup_{i=1}^n(\cak_{X,r_i}\cup\cak_Y^i)$ is a $\Gamma'$-cover induced by the trivial cover. After having fixed an isomorphism between two of these induced covers one can apply Lemma \ref{catlem}. Then (1) and (2) follow immediately from this result. Finally to obtain (3), it suffices to note that both $W/H'$ and $W_{XT}$ restrict to the same $G'$-covers over the patches $\widetilde{T_X}$, $\widetilde{T_Y}$ and $\hat{T}_i$, and then apply Lemma \ref{catlem} again. \end{proof}

Taking $\phi_{W_X}:W_X\to X$ and $\phi_{W_Y}:W_Y\to Y$ to be as defined in Subsections 4.4.1-4.4.3, and using Proposition \ref{ku6.3}, we obtain an irreducible normal $\Gamma'$-cover $W\to T$ satisfying the conditions above.   It remains to specialize this situation so that (as stated in the introduction to Section 4) we will have produced a finite \'etale \textsc{Galois} $p$-cover $T'\to U_{\Upsilon}$ such that $\gal(k(T')/k(U_{\Upsilon}))$ is a finite quotient of $M_p$ and that the embedding problem in (\ref{finaldiag1}) defined by the pair of maps $(\alpha',\varphi'_1)$ admits a proper solution. 

\subsubsection{The generic fiber}

For any $k[[t]]$-scheme $\cav$, let $\cav^g$ be its generic fiber
over $k((t)):=\text{frac}(k[[t]])$. Denote by $B$ the locus of
$xy=t$ in
$\ov{U}_{\Upsilon}\times_{\spec(k)}\bbp^1_y\times_{\spec(k)}\spec(k[[t]])$.
Let $\phi_X:X\to\ov{U}_{\Upsilon}$, resp. $\phi_Y:Y\to\bbp^1_y$, be the cover
of Subsubsection \ref{gcov}, resp. \ref{hcov}. The restriction of
the map
\begin{multline*}
\phi_X\times\phi_Y\times\text{id}:X\times_{\spec(k)}Y\times_{\spec(k)}
\spec(k[[t]])\longrightarrow\\
\ov{U}_{\Upsilon}\times_{\spec(k)}\bbp^1_y\times_{\spec(k)}\spec(k[[t]])
\end{multline*}
to $T$ gives a morphism $T\to B$.
%\newpage

The following diagram summarizes the situation
\begin{equation}\label{bigdiag}
\xymatrix{
&&W\ar[dl]_{H',\text{ \'etale}}\ar[dd]_{{\Gamma'}}\ar[dr]\\
&W/H\cong W_{XT}\ar[ddl]\ar[dr]^{G'}&&W_{YT}\ar[ddr]\ar[dl]\\
&&T\subset Z\ar@/^/[ddl]\ar@/_/[ddr]\ar[dd]\ar[dl]\ar[dr]\\
W_X\ar[dr]^{G'}&X\times_k\spec(k[[t]])\ar[d]&&Y\times_k\spec(k[[t]])\ar[d]&W_Y\ar[dl]_{H'}\\
&X\ar[d]_{\cag,\phi_X}&B\ar[dr]\ar[dl]&Y\ar[d]^{\phi_Y,\bbz/p\bbz}\\
&\ov{U}_{\Upsilon}\ar[d]&&\bbp^1_y\\
&\bbp^1_x.
}
\end{equation}

\begin{lemma}\label{lemku6.4}(\emph{cf.} \cite[Lemma 6.4]{ku})
Let $X$, $Y$ and $T$ be as in (\ref{bigdiag}). Denote
$\cag':=\cag\times\bbz/p\bbz$. The morphism $T\to B$
yields a $\cag'$-\textsc{Galois}
cover $T^g\to\ov{U}_{\Upsilon}\times_{\spec(k)}\spec(k((t)))$.
\end{lemma}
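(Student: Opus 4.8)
The plan is to reduce the statement to a computation of the compositum of two Galois extensions of the function field of $B^g$, and then to pin down its Galois group by comparing ramification over the locus $x=0$. Throughout I work with the function field $E:=\text{frac}(k(U_{\Upsilon})\otimes_kk((t)))$ of $\ov{U}_{\Upsilon}\times_{\spec(k)}\spec(k((t)))$, where $x$ denotes the pullback to $\ov{U}_{\Upsilon}$ of the coordinate of $\bbp^1_x$ under $\phi_{\ov{U}_{\Upsilon}}$.

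First I would analyze the base $B^g$. On the generic fibre $t$ becomes a unit of $k((t))$, so the defining equation $xy=t$ exhibits $y$ as the regular function $t/x$ on $\ov{U}_{\Upsilon}\times_{\spec(k)}\spec(k((t)))$ with values in the \emph{complete} curve $\bbp^1_y$ (indeed $y=\infty$ over the finitely many points where $x=0$, and $y=0$ over $\cau_{\Upsilon}$). Hence $B^g$ is the graph of $y=t/x$ and the first projection is an isomorphism $B^g\xrightarrow{\sim}\ov{U}_{\Upsilon}\times_{\spec(k)}\spec(k((t)))$, which I use to identify function fields. Next I would identify $T^g\to B^g$ generically. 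Since $F=(R\otimes_kA\otimes_kk[[t]])/(t-xy)$, inverting $t$ shows that the function field $L$ of the irreducible scheme $T^g$ (irreducibility of $T$ being implicit in the construction of the irreducible $W\to T$ of Proposition \ref{ku6.3}) is generated over $E$ by $k(X)$ and by $k(Y)$ subject to $xy=t$. Thus $L=E_X\cdot E_Y$ is the compositum of $E_X:=E\cdot k(X)$, the pullback of the $\cag$-\textsc{Galois} cover $X\to\ov{U}_{\Upsilon}$, and $E_Y:=E[z]/(z^p-z-f(t/x))$, the pullback of the Artin--Schreier $\bbz/p\bbz$-cover $Y\to\bbp^1_y$ along $y=t/x$.

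Both $E_X/E$ and $E_Y/E$ are \textsc{Galois}, with groups $\cag$ and $\bbz/p\bbz$ respectively: the constant-field base change $(-)\otimes_kk((t))$ does not alter these geometric \textsc{Galois} groups, because $k$ is algebraically closed inside $k(X)$ and inside $k(U_{\Upsilon})$, so that $k(X)/k(U_{\Upsilon})$ is linearly disjoint from $E/k(U_{\Upsilon})$. Consequently $L/E$ is \textsc{Galois} and $\gal(L/E)$ embeds into $\cag\times\bbz/p\bbz$ as the fibre product over $\gal((E_X\cap E_Y)/E)$. It therefore suffices to show $E_X\cap E_Y=E$.

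The main obstacle, and the real content, is this last disjointness, which I would settle by a ramification argument over the points of $\ov{U}_{\Upsilon}$ lying above $x=0$. Because the reparametrization was chosen so that $x=0$ is not a branch value of $\phi_{\ov{U}_{\Upsilon}}$ (equivalently $\phi'_X$ is \'etale over $\phi_X'^{-1}(\{x=0\})=\{r_1,\dots,r_n\}$), the function $x$ has a simple zero at each such point, so $f(t/x)$ has a pole of order exactly $\nu$ there; since $\nu$ is prime to $p$, the extension $E_Y/E$ is wildly, hence nontrivially, ramified over $x=0$. On the other hand $X\to\ov{U}_{\Upsilon}$ is \'etale over these same points, so $E_X/E$ is unramified over $x=0$. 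Now $E_X\cap E_Y$ is an intermediate field of $E_Y/E$, whose group $\bbz/p\bbz$ is simple, so it is either $E$ or $E_Y$; the case $E_X\cap E_Y=E_Y$ would force $E_Y\subseteq E_X$, contradicting that $E_Y/E$ is ramified over $x=0$ while $E_X/E$ is not. Hence $E_X\cap E_Y=E$, $\gal(L/E)=\cag\times\bbz/p\bbz=\cag'$, and $T^g\to\ov{U}_{\Upsilon}\times_{\spec(k)}\spec(k((t)))$ is the asserted $\cag'$-\textsc{Galois} cover. I expect the only delicate points to be the verification that base change to $k((t))$ preserves the \textsc{Galois} groups and the pole-order count for $f(t/x)$, both of which become routine once one knows that $\phi_{\ov{U}_{\Upsilon}}$ is unramified over $x=0$.
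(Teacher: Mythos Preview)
Your overall strategy---identify $B^g$, write the function field of $T^g$ as $L=E_X\cdot E_Y$ with $E_X/E$ and $E_Y/E$ \textsc{Galois} of groups $\cag$ and $\bbz/p\bbz$, then prove $E_X\cap E_Y=E$---is sound, and the first steps agree with the paper. The divergence is in the disjointness step: you argue by comparing ramification over $x=0$, whereas the paper argues by linear disjointness, using that $k(X)$ is finite over $k(x)$ and $k(Y)$ is finite over $k(y)$ with $x,y$ algebraically independent over $k$.

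Your ramification step has a genuine gap. You take $E_Y=E[z]/(z^p-z-f(t/x))$ with $f$ a polynomial of degree $\nu$ and conclude that $E_Y/E$ is wildly ramified over $x=0$. But this is the equation of $Y$ \emph{before} the reparametrization performed at the end of \S\ref{hcov}, which moves the unique branch point of $\phi_Y$ from $y=\infty$ to $y=0$; and it is this reparametrized $y$ that enters the relation $t=xy$ defining $T$. Thus in the paper's $T^g$ the extension $E_Y/E$ is ramified only where $y=0$, that is over $\cau_{\Upsilon}=\phi_{\ov U_{\Upsilon}}^{-1}(\{x=\infty\})$, and is \emph{unramified} over $x=0$. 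Over $\cau_{\Upsilon}$ the $\cag$-cover $E_X/E$ may well be ramified too (only \'etaleness over $U_{\Upsilon}$ is guaranteed), so your dichotomy ``one ramified, the other not'' is unavailable, and $E_X\cap E_Y=E$ does not follow from your argument. A secondary issue: your justification that base change to $k((t))$ preserves the \textsc{Galois} group is written only for $E_X/E$; for $E_Y/E$ the base change $k(y)\hookrightarrow E$ is far from a constant-field extension and needs its own argument.

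The paper's route is insensitive to where the ramification of $\phi_Y$ has been placed. Since $k(X)/k$ and $k(Y)/k$ are regular over the algebraically closed field $k$, the ring $R\otimes_kA\otimes_kk((t))$ is free of rank $[k(X):k(x)]\cdot p$ over $k((t))[x,y]$; modding out by the non-zerodivisor $t-xy$ and passing to fraction fields yields $[L:k((t))(x)]=[k(X):k(x)]\cdot p$, hence $[L:E]=|\cag|\cdot p$, which forces $\gal(L/E)=\cag\times\bbz/p\bbz$.
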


\begin{proof}We observe first that $B^g=\ov{U}_{\Upsilon}\times_{\spec(k)}\spec(k((t)))$. Next, 
it follows from the definition of $T$ that the function field of its
generic fiber $T^g$ over $k((t))$ is the compositum of
$L_1:=k(X)\otimes_kk((t))$ and $L_2:=k(Y)\otimes_kk((t))$. But these
fields are linearly disjoint over $k((t))$. In fact, $k(X)$ is a finite
extension of $k(x)$ and $k(Y)$ is a finite extension of $k(t/x)$
(recall that $y=t/x$). Hence
$(k(X)\otimes_kk((t)))\cap(k(Y)\otimes_kk((t)))=k((t))$. The lemma now
is a consequence of \cite[Chapter 2, Lemma 2.5.6]{frja}.
\end{proof}

Recall that $\cau_{\Upsilon}=\ov{U}_{\Upsilon}\setminus U_{\Upsilon}$. Since the branch locus of
$W^g\to T^g$ is determined by that of $W\to T$, then $W^g\to T^g$
is ramified only at the points points of $T^g$ lying
above those in $\cau_{\Upsilon}\times_{\spec(k)}\spec(k((t)))$. Moreover,
$T^g\to\ov{U}_{\Upsilon}\times_{\spec(k)}\spec(k((t)))$ is ramified only at the
points of $\cau_{\Upsilon}\times_{\spec(k)}\spec(k((t)))$. So, we get a sequence of
coverings with the following groups and ramification behavior :
%\newpage

\begin{equation}\label{genpic}
\xymatrix{
W^g\ar[rdd]_{{\Gamma'}}\ar@{=}[r]&W^g\ar[d]^{H',\text{ \'etale}}\\
&W^g/H\cong W_{XT}^g\ar[d]^{{G'},\text{ branched only at points lying over
  }\cau_{\Upsilon}\times_{\spec(k)}\spec(k((t)))}\\
&T^g\ar[d]^{\cag',\text{ branched only at }\cau_{\Upsilon}\times_{\spec(k)}\spec(k((t)))}\\
&\ov{U}_{\Upsilon}\times_{\spec(k)}\spec(k((t))).}
\end{equation}

\subsubsection{Specialization}\label{special}
In order to obtain a similar sequences of coverings over the field $k$ with the same ramification behavior we need the following specialization result. 

\begin{proposition}\label{ku6.6}\cite[Proposition 6.6]{ku} Let $k$ be an algebraically closed field and $X_0,\cdots,X_3$ normal projective curves defined over $\spec(k[[t]])$. Denote by $X_i^g$ the corresponding generic fibers. Assume the following conditions are satisfied:
\begin{enumerate}
\item For $i=1,2,3$, the curve $X_i$ has generically smooth closed fiber denoted by $X_i^0$.
\item For $i=1,2,3$, there exists proper surjective morphisms $\psi_i:X_i\to X_{i-1}$ defined over $\spec(k[[t]])$. Denote by $\psi^g_i:X_i^g\to X_{i-1}^g$ the induced morphisms on the generic fibers. 
\item The morphisms $\psi_i$ from part (2) are \textsc{Galois} covers such that the group corresponding to $\psi_1$ is $\cag'$, to $\psi_2$ is $G'$ and to $\psi_3$ is $H'$. The composite morphism $\psi_3\circ\psi_2$ is also a \textsc{Galois} cover with group $\Gamma'$
\item There exists a smooth projective curve $Y_0$ defined over $k$ such that $X_0^g=Y_0\times_{\spec(k)}\spec(k((t))$. 
\item The curve $X_1^g$ has genus at least 1.
\item Let $\zeta_1,\cdots,\zeta_r\in Y_0$ and $\xi_1,\cdots,\xi_r\in X_0^g$ be the points induced by them. Suppose that the composite morphism $\psi_3^g\circ\psi_2^g\circ\psi_1^g$ is \'etale away from $\{\xi_1,\cdots,\xi_r\}$.
\end{enumerate}
Then there exist smooth connected projective curves $Y_1,Y_2,Y_3$ defined over $k$ and morphisms $\eta_i:Y_i\to Y_{i-1}$ for $i=1,2,3$ such that their composite $\eta_3\circ\eta_2\circ\eta_1$ is \'etale away from $\{\zeta_1,\cdots,\zeta_r\}$. Finally the morphisms $\eta_1,\eta_2,\eta_3,\eta_3\circ\eta_2$ are \textsc{Galois} covers with groups $\cag'$, $G'$, $H'$ and $\Gamma'$, respectively.\end{proposition}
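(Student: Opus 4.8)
The plan is to follow \cite[Proposition 6.6]{ku} and transport the tower of covers living on the generic fibre to a tower over $k$ by specializing at $t=0$, using the integral models $X_i$ to control the reduction. Since by hypothesis (4) the base is constant, $X_0^g=Y_0\times_{\spec(k)}\spec(k((t)))$, I first spread out the branch locus $\{\xi_1,\cdots,\xi_r\}$ to the \emph{horizontal} divisor $D:=\bigcup_{i=1}^r\bigl(\zeta_i\times_k\spec(k[[t]])\bigr)$ of $Y_0\times_k\spec(k[[t]])$, which is finite \'etale over $\spec(k[[t]])$. By hypotheses (3) and (6) the composite $\psi_3^g\circ\psi_2^g\circ\psi_1^g$ presents $\Gamma'$ as the \textsc{Galois} group of a connected cover of $Y_0\times_kk((t))$ that is \'etale away from $D^g$, filtered by the subtowers with groups $\cag'$, $G'$ and $H'$. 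The $Y_i$ will be produced as the (smooth, connected) reductions of these covers.

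The descent from $k((t))$ to $k$ is governed by \textsc{Grothendieck}'s specialization theorem \cite[Exp. X]{sga1} for the smooth family $\bigl(Y_0\setminus\{\zeta_1,\cdots,\zeta_r\}\bigr)\times_k\spec(k[[t]])\to\spec(k[[t]])$, whose closed fibre is $U_0:=Y_0\setminus\{\zeta_1,\cdots,\zeta_r\}$ over $k$. Normality of the models $X_i$ provides good reduction, which is exactly what forces the surjection defining the generic cover to factor through the specialization homomorphism onto the fundamental group of $U_0$; the image is then a connected cover of $U_0$ with the same \textsc{Galois} group. On the prime-to-$p$ quotient the specialization map is an isomorphism, so the tame part $\psi_3$ (recall $H'$ is of order prime to $p$) descends directly. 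The wild $p$-part $\psi_1$, with group $\cag'=\cag\times\bbz/p\bbz$, descends for a structural reason: $\cag$ is by construction a finite quotient of $M_p$, hence already the group of an \'etale cover defined over $k$ (\emph{cf.} (\ref{pdiag1})), and the extra factor $\bbz/p\bbz$ comes from the \textsc{Artin-Schreier} cover of Subsubsection \ref{hcov}, likewise defined over $k$. Because $D$ is horizontal, purity of the branch locus together with \textsc{Abhyankar}'s lemma forbids any fibral ramification, so the reduced cover is branched only over $\{\zeta_1,\cdots,\zeta_r\}$ and retains the $\cag'$-, $G'$-, $H'$- and $\Gamma'$-actions; setting $\eta_i:Y_i\to Y_{i-1}$ to be the induced maps gives the stated groups, and in particular $\eta_3\circ\eta_2\circ\eta_1$ is \'etale away from $\{\zeta_1,\cdots,\zeta_r\}$.

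It remains to see that the $Y_i$ are genuine smooth connected projective curves, and this is where hypotheses (1) and (5) enter. Generic smoothness of the closed fibres together with normality rules out a generically inseparable or nonreduced reduction, and the bound $g(X_1^g)\ge 1$ lets one pass to a good (stable) model of the family so that the arithmetic genus is preserved and no component of the reduction collapses; hence each $Y_i$ is smooth, connected and of the expected genus. The main obstacle throughout is the behaviour of the \emph{wild} ramification of $\psi_1$ under the specialization $t\to 0$: one must guarantee that passing to the closed fibre neither creates new branch points nor changes the conductor, and it is precisely for this that the normal models, the generic smoothness (1) and the genus bound (5) are indispensable, since the naive invariance of the fundamental group under the extension $k\hookrightarrow\overline{k((t))}$ \emph{fails} for wildly ramified covers.
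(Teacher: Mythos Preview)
Your approach has a genuine gap: you attempt to specialize at $t=0$ and take the closed fibres $X_i^0$ as the curves $Y_i$, but hypothesis (1) only asserts that the closed fibres are \emph{generically} smooth, not smooth or even irreducible. In the application for which this proposition is stated (diagram (\ref{bigdiag})), the closed fibre of $T$ is explicitly a \emph{reducible} nodal curve consisting of $X$ together with $n$ copies of $Y$ meeting at the points $\tau_1,\ldots,\tau_n$. So the reduction at $t=0$ is degenerate, and no appeal to normality of the total space or to \textsc{Grothendieck}'s specialization theorem can force the special fibre to be a smooth connected curve. Your claim that ``normality of the models $X_i$ provides good reduction'' is simply false in this setting, and the genus bound (5) does not repair it: stable reduction gives a stable (hence possibly reducible, nodal) curve, not a smooth one.

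The paper's argument avoids $t=0$ entirely. Since the groups $\cag'$, $G'$, $H'$, $\Gamma'$ are finite and the curves are of finite type, the whole tower descends from $k[[t]]$ to a regular finite-type $k[t]$-algebra $\caa\subset k[[t]]$; after inverting $t$ one obtains a family of towers over the $k$-variety $E=\spec(\caa[t^{-1}])$, with base $Y_0\times_kE$ and branch locus $\{\zeta_{i,E}\}$. One then shows that over a non-empty open $E'\subset E$ every fibre of each $\psi_{i,E}$ is irreducible, and specializes at any closed point of $E'$ (a $k$-point, since $k$ is algebraically closed). This ``spread out and pick a general $k$-point'' manoeuvre sidesteps both the bad reduction at $t=0$ and the failure of invariance of $\pi_1$ for wildly ramified covers that you correctly flag as an obstacle; it does not require separate treatment of the tame and wild parts, nor any reference to the particular construction of $\cag'$.
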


\begin{proof}[Sketch of proof]The first observation is that since $H'$, $G'$ and $\Gamma'$ are finite groups, then the covers $\psi_i$ actually descend to \textsc{Galois} covers of connected $\spec(\caa)$-schemes $\psi_{i,\caa}:X_{i,\caa}\to X_{i-1,\caa}$, where $\caa\subset k[[t]]$ is a regular finite type $k[t]$-algebra. Moreover, each $\psi_{i,\caa}$ still has the same group as $\psi_i$. Let $E:=\spec(\caa[t^{-1}])$. By taking the base changes $X_{i,E}:=X_{i,\caa}\times_{\spec(\caa)}\spec(E)$ and considering the induced morphisms $\psi_{i,E}$, they still have the same groups. Note that $X_{0,E}\cong Y_0\times_{\spec(k)}E$. Furthermore, for each $i=1,\cdots,r$, denote by $\zeta_{i,E}$ the point induced by $\zeta_i$ in $X_{0,E}$ for each $i=1,\cdots,r$ and observe that $\psi_{3,E}\circ\psi_{2,E}\circ\psi_{1,E}$ is only ramified over $\{\zeta_{1,E},\cdots,\zeta_{r,E}\}$. The proof is completed by showing that there exists an non-empty open subset $E'$ of $E$ so that the fiber of $\psi_{i,E}$ over each closed point of $E'$ is irreducible and non-empty. \end{proof}

It follows from Proposition \ref{ku6.6} that there exist smooth connected
projective curves $W^s$, $W^s_{XT}$ and $T^s$ defined over $k$
such that we have a sequence of coverings with the same group and
ramification behavior as (\ref{genpic})
\begin{equation}\label{spepic}
\xymatrix{
W^s\ar@/_/[rdd]_{\phi_{W^s},{\Gamma'}}\ar@{=}[r]&W^s\ar[d]^{H',\text{ \'etale}}\\
&W^s/H'\cong W_{XT}^s\ar[d]^{{G'},\text{ branched only at points lying over
  }\cau_{\Upsilon}}\\
T':=\phi_{T^s}^{-1}(U_{\Upsilon})\ \ar@{>->}[r]&T^s\ar[d]^{\phi_{T^s},\cag',\text{
    branched only at }
\cau_{\Upsilon}}\\
&\ov{U}_{\Upsilon}.}
\end{equation}
The ${\Gamma'}$-\textsc{Galois} cover
$\phi_{W^s}$ (which is \'etale over $T'$) gives a proper solution
$\lambda'_{T'}$ of
the embedding problem
\begin{equation}\label{cdt1a}
\xymatrix{
&\pi_1(T')\ar@{>>}[dl]_{\lambda'_{T'}}\ar@{>>}[d]^{\varphi'_{T'}}\\
{\Gamma'}\ar@{>>}[r]^{\alpha}&{G'}.
}
\end{equation}

\begin{remark}The diagram (\ref{spepic}) [in its lower part] shows that $T'$ (defined as the pre-image of $U_{\Upsilon}$ under the $\cag'$-\textsc{Galois} cover $\phi_{T^s}:T^s\to\ov{U}_{\Upsilon}$) is indeed an \'etale $\cag'$-\textsc{Galois} cover of $U_{\Upsilon}$. Moreover, Remark 3.3 and Lemma 2.8 assure us that $\cag'$ is a finite quotient of $M_p$. Hence, it belongs to the family of groups described in 3.2.
\end{remark}

\subsection{Solving (\ref{epn}) for $N$}\label{gpthar}.

As $T'$ is a curve, $\pi_1(T')$ is projective and so by Lemma \ref{split} we have a proper solution $\lambda_{T'}$ of the following embedding problem :
\begin{equation}\label{cdt1}
\xymatrix{
&\pi_1(T')\ar@{>>}[dl]_{\lambda_{T'}}\ar@{>>}[d]^{\varphi_{T'}}\\
{\Gamma}\ar@{>>}[r]^{\alpha}&{G}.
}
\end{equation}
Thus we are now in the
situation described by the following diagram
\begin{equation}\label{finaldiag}
\xymatrix{
&&N\
\ar@{>>}[d]_{\psi}\ar@{>->}[r]&\pi_1(T')\ar@{>>}[dl]_{\varphi_{T'}}\ \ar@{>->}[r]
&\pi_1(C)\ar@{>>}[dll]^{\varphi}\\
&\Gamma\ar@{>>}[r]^{\alpha}&G, }
\end{equation}
where $\Ker(\alpha)$ is minimal normal and is contained in
$M(\Gamma)$. Denote $\Psi:=(\lambda_{T'})_{|N}$. Observe that
$\alpha(\lambda_{T'}(N))=\varphi_{T'}(N)=\psi(N)=G$. Whence
$\lambda_{T'}(N)\cdot M(\Gamma)=\Gamma$, because $H\le M(\Gamma)$.
Since $\lambda_{T'}(N)$ is a normal subgroup of $\Gamma$, we
conclude from \cite[Proposition 8.3.6]{riza} that
$\Psi(N)=\Gamma$, \emph{i.e.}, $\Psi$ is a proper solution to the
embedding problem (\ref{epn}).

\subsection{The case where $p$ divides the order of $H$}\label{p1sec}

Since $H$ is minimal normal in $\Gamma$, then $H$ is generated by
its Sylow-$p$ subgroups, i.e.,  $H$ is a \emph{finite quasi-$p$ group}.
It follows from \cite[Theorem B]{po} or \cite[Corollary 4.6]{hacr}
that every finite quasi-$p$ embedding problem of smooth connected
affine curves defined over an algebraically closed field of
characteristic $p$ has a proper solution. Now, it suffices to
repeat the argument of Subsection \ref{gpthar}. \hfill $\square$

\section{Proof of Theorem \ref{mainthm} and group theoretic consequences}
In this section we obtain group theoretic consequences from
Theorems \ref{melnikov} and \ref{homogeneous}. In particular we
prove in Corollary \ref{propernormal} that given a normal subgroup
$N$ of $\pi_1(C)$, every proper open subgroup of $N$ is a free
profinite group.  As a consequence,  we obtain Kumar's result
\cite[Theorem 4.1]{ku} showing that the commutator subgroup of
$\pi_1(C)$ is a free profinite group of countable rank (see
Corollary \ref{kumar}).

\begin{definition}
A  profinite group is said to be  {\it virtually free}, if it has
an open free subgroup. \end{definition}

\begin{theorem}\label{characterization}
Let $C$, resp. $C_1$, resp. $C_2$, be smooth connected
  affine curves defined
  over an algebraically closed field of characteristic $p>0$ of
  countable cardinality. Let
  $\pi_1(C)$, resp. $\pi_1(C_1)$, resp. $\pi_1(C_2)$, be their
  respective algebraic fundamental groups. Let $N$ be a normal
   subgroup of
$\pi_1(C)$ such that a Sylow-$p$ subgroup $M_p$ of the quotient
group $M=\pi_1(C)/N$ is infinitely generated. Then
\begin{enumerate}
\item $N$  is a virtually free profinite group. \item Let
$G_1\triangleleft \pi_1(C_1)$ and $G_2\triangleleft\pi_1(C_2)$ be
two normal subgroups  such that the quotients $\pi_1(C_1)/G_1$ and
$\pi_1(C_2)/G_2$ have infinitely generated Sylow-$p$ subgroups.
Then $G_1$ and $G_2$
 are isomorphic if and only if $G_1/M_S(G_1)\cong G_2/M_S(G_2)$ for
  every finite simple group $S$.
\item  $N$ is a free profinite group of countable rank if and only
if $N/M_S(N)$ is infinite for every finite simple group $S$.
\end{enumerate}
\end{theorem}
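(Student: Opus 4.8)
The plan is to deduce all three parts from the homogeneity of the relevant normal subgroups (Theorem \ref{homogeneous}) together with Melnikov's characterizations (Theorem \ref{melnikov}) and its consequence Corollary \ref{cor1}. A standing remark is that since $k$ is countable, $\pi_1(C)$ is second countable, hence so is every closed subgroup such as $N$, $G_1$, $G_2$; thus all the cited Melnikov machinery applies and ``free profinite'' here always means ``of countable rank.'' Part (3) should then be essentially immediate: by Theorem \ref{homogeneous} the group $N$ is homogeneous, so half of the hypothesis of Theorem \ref{melnikov}(4) holds automatically, and that item reads that $N$ is free profinite of countable rank if and only if $N$ is homogeneous and $N/M_S(N)$ is infinite for every finite simple group $S$. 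The homogeneity being free of charge, this collapses exactly to the stated equivalence.

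For part (1) I would argue that a proper open subgroup always exists and then invoke Corollary \ref{cor1}. If $N$ is trivial it is free of rank $0$, hence trivially virtually free. Otherwise $N$, being a nontrivial profinite group, possesses a proper open subgroup $U$; by Corollary \ref{cor1} this $U$ is a free profinite group of countable rank. An open free subgroup is precisely what virtual freeness requires, so $N$ is virtually free in every case.

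The real content is part (2); its forward direction is formal. If $G_1\cong G_2$ then, since $M_S(\cdot)$ is the kernel of the canonical epimorphism onto the maximal power of $S$ and is therefore carried to $M_S(\cdot)$ by any isomorphism, the induced maps yield $G_1/M_S(G_1)\cong G_2/M_S(G_2)$ for every finite simple $S$. Conversely, suppose these quotients agree for all $S$. By Theorem \ref{homogeneous} both $G_1$ and $G_2$ are homogeneous, hence by Theorem \ref{melnikov}(1) accessible subgroups of a free profinite group of countable rank. The plan is then to invoke Melnikov's classification of homogeneous second countable profinite groups up to isomorphism: such a group is determined by its family of simple-quotient invariants $\{\Pi/M_S(\Pi)\}_S$, equivalently by the multiplicity (finite or $\aleph_0$) of each finite simple $S$ in the maximal $S$-elementary quotient, exactly as a free profinite group is determined by its rank. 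This is the positive-characteristic incarnation of \cite[Theorem 2.2]{za}, whose argument is reworked in the Appendix through Theorem \ref{homogeneous}. Granting this classification, the matching of all $M_S$-quotients forces $G_1\cong G_2$.

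The step I expect to be the crux is isolating and applying, from \cite[chapter 8]{riza}, the fact that the $M_S$-quotients form a \emph{complete} system of isomorphism invariants for homogeneous groups, and not merely a system of necessary ones; one must also verify that second countability keeps every multiplicity within the range the classification covers (so that, in particular, $\Pi/M(\Pi)=\bigcap_S M_S(\Pi)$ contributes no further invariant beyond the $M_S$-data). Once that classification is in hand, parts (1) and (3) are short and part (2) is a direct application.
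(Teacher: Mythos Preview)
Your approach matches the paper's: establish homogeneity via Theorem \ref{homogeneous} and then invoke Melnikov's results from \cite[chapter 8]{riza}. The paper cites \cite[Corollary 8.5.8]{riza} for (1) (homogeneous implies virtually free, bypassing Corollary \ref{cor1}) and \cite[Theorem 8.5.2]{riza} for (2) --- the latter is exactly the classification theorem you anticipate (homogeneous second countable profinite groups are determined up to isomorphism by their $M_S$-quotients), so your ``crux'' is already available as a black box; for (3) the paper deduces the claim from (2) and Theorem \ref{melnikov}(4), but your direct appeal to Theorem \ref{melnikov}(4) is equally valid.
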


\begin{proof}
Theorem \ref{homogeneous} together with  \cite[Corollary
  8.5.8]{riza} implies
(1) and together with \cite[Theorem 8.5.2]{riza} implies (2). Item
(3) is a consequence of (2) and Theorem \ref{melnikov} (item (4)).
\end{proof}

The proof of Theorem \ref{mainthm} follows from Theorem
\ref{homogeneous} and the next two theorems which are proved
similarly as in \cite[Theorem 2.4, Corollary 2.5 and Theorem
2.6]{za}, respectively. For the convenience of the reader we
include their proofs, taking into account that algebraic fundamental groups under consideration here (in contrast with the previous paper) are those of smooth connected affine curves over algebraically closed fields of positive characteristic and countable cardinality.

\begin{theorem}
\label{normal} Let $\pi_1(C)$ be the algebraic fundamental group
of a smooth connected affine curve $C$ defined over an
algebraically closed field $k$ of characteristic $p>0$ of
countable cardinality. Let $N$ be  a  normal  subgroup of
$\pi_1(C)$ such that a Sylow-$p$ subgroup $M_p$ of the quotient
group $M =\pi_1(C)/N$ is infinitely generated. Then $N$ is
isomorphic to a normal subgroup of a free profinite group of
countable rank.
\end{theorem}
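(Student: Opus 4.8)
The plan is to deduce the statement from two results already in hand: Theorem \ref{homogeneous}, which asserts that $N$ is homogeneous, and Theorem \ref{melnikov}(2). By the latter, $N$ is isomorphic to a normal subgroup of a free profinite group of countable rank precisely when $N$ is homogeneous \emph{and} $N/M_l(N)$ is either trivial or infinite for every prime $l$. Since homogeneity is given, the entire content of the proof reduces to verifying this dichotomy, one prime at a time. First I would record that $M_l(N)$, being characteristic in $N$ while $N$ is normal in $\pi_1(C)$, is itself normal in $\pi_1(C)$, and that $N/M_l(N)$ is exactly $N^{\ab}\otimes\bbf_l$, the maximal elementary abelian $l$-quotient of $N$, carrying a natural action of $M=\pi_1(C)/N$ and canonically $\bbf_l$-dual to $\Hom(N,\bbf_l)=H^1(N,\bbf_l)$. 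Thus it suffices to prove: for each prime $l$, the space $\Hom(N,\bbf_l)$ is either $0$ or infinite-dimensional over $\bbf_l$.

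Next I would translate this into geometry. Using the identification of $\pi_1(C)$ with $\gal(K_{\ur,S}/K)$ from Section \ref{fampcov}, the subgroup $N$ is the fundamental group of the infinite \'etale tower $\tilde{C}_M\to C$ with Galois group $M$, and by continuity of cohomology $\Hom(N,\bbf_l)=\varinjlim_{U}H^1(\pi_1(U),\bbf_l)$, the colimit being taken over the finite \'etale covers $U=U_{\Upsilon}$ of $C$ dominated by the tower, with transition maps given by pull-back. The hypothesis enters decisively at this point: since $M\supseteq M_p$ is infinite and $M_p$ is an \emph{infinitely generated} pro-$p$ group, the tower contains an infinite family of independent $p$-power subcovers, each wildly ramified over $S$ as in Subsection \ref{hcov}; by Riemann-Hurwitz the genera $g_U$ of the intermediate curves therefore tend to infinity. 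Consequently the levelwise $l$-ranks grow without bound: for $l\neq p$ one has $\dim_{\bbf_l}H^1(\pi_1(U),\bbf_l)\geq 2g_U\to\infty$, and for $l=p$ the same unboundedness follows from the supply of \textsc{Artin-Schreier} covers of arbitrarily large genus highlighted in Subsection \ref{hcov}.

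It then remains to pass from unbounded levelwise growth to the dichotomy for the colimit, and this is the step I expect to be the main obstacle. Unbounded growth of $\dim_{\bbf_l}H^1(\pi_1(U),\bbf_l)$ does not by itself force $\Hom(N,\bbf_l)$ to be infinite, since the pull-back transition maps can annihilate exactly those covers that become disconnected higher up the tower. To control this I would regard $N/M_l(N)=N^{\ab}\otimes\bbf_l$ as a module over the completed group algebra $\bbf_l[[M]]$ of the infinite group $M$; the content to establish is that such a module, when nonzero, cannot be finite over $\bbf_l$. Concretely, assuming $\Hom(N,\bbf_l)\neq 0$ I would produce infinitely many $\bbf_l$-independent characters by exhibiting, at deeper levels of the wild subtower, new \'etale $l$-covers whose $M$-translates stay connected over $\tilde{C}_M$ (so that they restrict nontrivially to $N$), comparing at each level the Riemann-Hurwitz genus increment against the \textsc{Schreier}-type bound on the dimension of the kernel of the corresponding restriction map, and arguing that the former outruns the latter. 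The delicate point, and the crux of the whole argument, is precisely this survival-and-connectivity bookkeeping in the colimit, to be carried out uniformly for $l=p$ (leaning on the wild covers of Subsection \ref{hcov} and on $\mathrm{cd}(\pi_1(C))\le 1$, which holds as $\pi_1(C)$ is projective) and for $l\neq p$ (where Riemann-Hurwitz supplies the growth).
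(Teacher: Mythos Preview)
Your reduction is correct: by Theorem~\ref{homogeneous} and Theorem~\ref{melnikov}(2) everything comes down to showing that $N/M_l(N)$ is trivial or infinite for each prime $l$. But your proposed argument for this dichotomy has a genuine gap, and you identify it yourself: passing from unbounded growth of $\dim_{\bbf_l}H^1(\pi_1(U),\bbf_l)$ along the tower to infiniteness of the colimit $\Hom(N,\bbf_l)$ is exactly the problem, and the ``survival-and-connectivity bookkeeping'' you sketch is not an argument. Concretely, the kernel of the restriction $\Hom(\pi_1(U),\bbf_l)\to\Hom(N,\bbf_l)$ is $\Hom(\pi_1(U)/N,\bbf_l)$, and $\pi_1(U)/N$ is an open subgroup of $M$; nothing you have written controls the $l$-rank of these subgroups against the genus growth, and for $l=p$ this kernel is typically \emph{infinite}-dimensional (since the pro-$p$ quotient of any affine fundamental group has countable rank), so the naive subtraction does not even make sense. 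The $\bbf_l[[M]]$-module heuristic is suggestive but you have not proved that a nonzero such module cannot be finite.

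The paper avoids the colimit entirely by a cleaner maneuver that you are missing. Assume $N/M_l(N)$ is finite. Then $M_l(N)$ is \emph{open} in $N$, so one can choose a single open subgroup $\cam=\pi_1(U)\supseteq N$ with $M_l(\cam)\cap N=M_l(N)$; this pins $N/M_l(N)$ down at one finite level instead of chasing it through a colimit. Now pass to the maximal pro-$l$ quotient $\cam^{(l)}$ and let $\ov N$ be the image of $N$ there. The crucial point---which your colimit formulation obscures---is that $\ov N$ is \emph{normal} in $\cam^{(l)}$, and $\cam^{(l)}$ is a \emph{free} pro-$l$ group: of countable rank when $l=p$ (by \cite[Corollary~3.7]{hais}), and of rank $2g+r-1$ when $l\ne p$ (by \cite[XIII, 2.12]{sga1}). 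Structure theory of normal subgroups of free pro-$l$ groups then finishes the job immediately: for $l=p$, a nontrivial normal subgroup of a free pro-$p$ group of infinite rank again has infinite rank, forcing $N/M_l(N)$ infinite or trivial; for $l\ne p$, one can enlarge the index of $\cam$ (keeping $M_l(\cam)\cap N=M_l(N)$) to push $2g+r-1$ beyond $d(N/M_l(N))$, and then the normal subgroup $\ov N$ of a free pro-$l$ group of that rank cannot have smaller rank, a contradiction. The two ingredients you should add are: (i) the finite-level capture $M_l(\cam)\cap N=M_l(N)$, and (ii) the rank rigidity of normal subgroups of free pro-$l$ groups, in place of the colimit analysis.
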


\begin{proof}
By Theorems \ref{homogeneous} and \ref{melnikov} (item (1)), $N$ is isomorphic to an accessible
subgroup of infinite index of a free profinite group of countable rank. Therefore,
by Theorem \ref{melnikov} (item (2)), we just need to show that $N/M_l(N)$ is infinite
or trivial for any prime $l$. Note that by definition $N/M_l(N)$ is the maximal
elementary abelian pro-$l$ quotient  of $N$ 

Suppose $N/M_l(N)$ is finite. Then $M_l(N)$ is open in $N$ and
thus there exists an open subgroup $\cam$ of $\pi_1(C)$ such
that $M_l(\cam)\cap N=M_l(N)$. The group $\cam$ is
isomorphic to  the algebraic fundamental group $\pi_1(U)$ of a
finite \'etale cover $U$ of $C$. Moreover, $N$ is the intersection
of such $\cam$'s. Put
$\cam^{(l)}:=\cam/R_l(\cam)$,
$\ov{N}:=NR_l(\cam)/R_l(\cam)$.

If $l=p$, then  $\cam^{(p)}$ is a free pro-$p$ group of
countable rank (\emph{cf.} \cite[Corollary 3.7]{hais}).
Moreover, as
$\ov{N}$ is a normal subgroup of $\cam^{(p)}$, it is a free
pro-$p$ group of countable rank, unless it is trivial (\emph{cf.}
\cite[Proposition 8.6.3]{riza}). Hence, $N/M_p(N)$ is infinite or
trivial, as needed.

Suppose $l\neq p$. Observe now that we can choose $\cam$
satisfying the property $M_l(\cam)\cap N=M_l(N)$ with index in
$\pi_1(C)$ as large as we wish. In particular, the genus $g$ of
the smooth compactification $\ov{U}$ of $U$ and the finite number
$r$ of points of $\ov{U}\setminus U$, may be taken arbitrarily
large. Let $\pi_1(U)\twoheadrightarrow\pi_1(U)^{(l)}$ be the quotient homomorphism to the maximal pro-$l$ quotient of $\pi_1(U)$. It is known from the last statement of \cite[Exp. XIII, Corollaire 2.12]{sga1} that the maximal pro-prime-to-$p$ quotient $\pi_1(U)^{(p')}$ of $\pi_1(U)$ is a free profinite group on $t=2g+r-1$ generators. Taking its maximal pro-$l$-quotient it coincides with $\pi_1(U)^{(l)}$. Hence the latter group is a free pro-$l$ of
rank $t=2g+r-1$. We deduce from all this reasoning an epimorphism
$\Upsilon\twoheadrightarrow\hat{F}_t$, where the latter group is a
free pro-$l$ group of rank $t$. This epimorphism factors through
$\cam^{(l)}$ and since $\ov{N}$ is a normal subgroup of
$\cam^{(l)}$ it has rank at least $t$. But $t$ can be chosen
larger than the minimal number of generators of $N/M_l(N)$, this
yields a contradiction.
\end{proof}

\begin{remark}\label{remsha}
In fact \cite[Corollary 3.7]{hais} proves a bit more than what was
stated above. Although we do not require it for our purposes, it follows from
the above that for every smooth affine connected variety
$X$ defined over an algebraically closed field $k$ of
characteristic $p>0$, the maximal pro-$p$ quotient
$\pi_1(X)^{(p)}$ of the algebraic fundamental group $\pi_1(X)$ of
$X$ is free in as many generators as the cardinality of $k$.

In the case of smooth connected projective curves $X$, it was
already known to \textsc{Shafarevi\u{c}} \cite[Theorem 2]{sha}
that $\pi_1(X)$ is free in $\gamma_X$ generators, where $\gamma_X$
denotes the $p$-rank of $X$ (\emph{i.e.}, the $\bbf_p$-dimension
of the $p$-torsion subgroup $J_X[p]$ of the Jacobian variety $J_X$
of $X$). The number $\gamma_X$ is bounded above by the genus of
$X$. For a proof of the same result in the spirit of \'etale
cohomology \emph{cf.} \cite[Theorem 1.9]{cr}.
\end{remark}

\begin{corollary}\label{propernormal}
Every proper open subgroup of $N$ is a free profinite group.
\end{corollary}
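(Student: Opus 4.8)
The plan is to deduce Corollary \ref{propernormal} directly from Theorem \ref{homogeneous} together with the structure theory of homogeneous groups recalled in Theorem \ref{melnikov}. The key point is that a \emph{proper open} subgroup $U$ of $N$ has infinite index in the ambient free profinite group; that is, although $U$ is open in $N$, it is not open in any free profinite group of which $N$ is an accessible (e.g.\ normal) subgroup, precisely because $N$ itself has infinite index there. Open subgroups of infinite-index subgroups are exactly the sort of object that Melnikov's theory controls.

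First I would invoke Theorem \ref{homogeneous} to conclude that $N$ is homogeneous, and hence by Theorem \ref{melnikov}(1) that $N$ is (isomorphic to) an accessible subgroup of a free profinite group $F$ of countable rank. Next I would observe that an open subgroup $U$ of a homogeneous group is again accessible in $F$: indeed accessibility is equivalent to having subnormal image in every finite quotient, and if $N$ is subnormal in each finite quotient then so is the image of any open $U\le N$, since $U$ is itself subnormal (in fact normal up to a finite chain) in $N$. Thus $U$ is accessible of infinite index in $F$. The heart of the argument is then to apply the Nielsen--Schreier type theorem for free profinite groups: a proper open subgroup of a \emph{free} profinite group of countable rank is again free of countable rank, and more generally, by the results in \cite[chapter 8]{riza}, a proper open subgroup of an accessible subgroup of infinite index of $F$ inherits freeness.

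The cleanest route, which I would take, is to reduce to Theorem \ref{melnikov}(4): I claim $U$ is itself homogeneous and satisfies $U/M_S(U)$ infinite for every finite simple group $S$, whence $U$ is free profinite of countable rank. Homogeneity of $U$ should follow because open subgroups of homogeneous groups are homogeneous (this is one of the Melnikov closure properties in \cite[chapter 8]{riza}), using that solving finite split embedding problems for $U$ can be transferred to $N$ via the finitely many cosets. For the simple-quotient condition, since $U$ is open in $N$ it has infinite index in $F$, and any proper open subgroup of an infinite-index subgroup of a free profinite group of countable rank has \emph{infinitely} many quotients isomorphic to each $S$ (intuitively, passing to a finite-index subgroup of something already infinitely generated keeps it infinitely generated and the simple quotients proliferate). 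I expect the main obstacle to be exactly the verification that $U/M_S(U)$ is infinite for \emph{every} $S$, including the delicate prime $l=p$ case, rather than merely trivial or infinite as in the normal case treated in Theorem \ref{normal}.

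To handle this obstacle I would mirror the genus/rank-counting argument used in the proof of Theorem \ref{normal}: since $U$ is open in $N$, it contains the intersection of $N$ with some open $\cam\le\pi_1(C)$, and $\cam\cong\pi_1(U')$ for a finite \'etale cover $U'$ of $C$ whose compactification has arbitrarily large genus $g$ and number of deleted points $r$; the pro-prime-to-$p$ quotient of $\pi_1(U')$ is free of rank $2g+r-1$ by \cite[Exp. XIII, Corollaire 2.12]{sga1}, and for the prime $p$ one uses freeness of the maximal pro-$p$ quotient from \cite[Corollary 3.7]{hais}. Because $U$ is a \emph{proper} open subgroup of the infinitely generated $N$, these bounds force each $U/M_S(U)$ to be infinite rather than finite, so Theorem \ref{melnikov}(4) applies and $U$ is free profinite of countable rank, establishing the corollary.
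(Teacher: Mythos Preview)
Your approach is considerably more circuitous than the paper's and has real gaps. The paper's proof is a two-liner: Theorem~\ref{normal} (just proved) says $N$ is isomorphic to a \emph{normal} subgroup of a free profinite group of countable rank, and then \cite[Theorem 8.7.1]{riza} states precisely that every proper open subgroup of such a normal subgroup is free profinite of countable rank. That is the whole argument; you are effectively trying to reprove \cite[Theorem 8.7.1]{riza} from scratch rather than invoking it.

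Two specific problems with your route. First, your accessibility step is wrong as stated: you assert that $U$ is accessible in $F$ because ``$U$ is itself subnormal (in fact normal up to a finite chain) in $N$'', but an open subgroup of a profinite group need \emph{not} be subnormal, so the image of $U$ in a finite quotient of $F$ need not be subnormal merely because the image of $N$ is. (It is true that open subgroups of homogeneous groups are homogeneous, but that requires a different argument than the one you gave.) Second, and more seriously, your argument that $U/M_S(U)$ is \emph{infinite} for every finite simple $S$ is not established. The genus/rank counting of Theorem~\ref{normal} only yields the dichotomy ``trivial or infinite'' for $N/M_l(N)$; you have not explained why properness of $U$ in $N$ rules out triviality of $U/M_S(U)$ for an arbitrary $S$. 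Saying ``these bounds force each $U/M_S(U)$ to be infinite rather than finite'' is an assertion, not a proof, and this is exactly the nontrivial content packaged in \cite[Theorem 8.7.1]{riza}. Use Theorem~\ref{normal} together with that reference, as the paper does.
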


\begin{proof}
The result follows from the preceding theorem combined with
\cite[Theorem 8.7.1]{riza}.
\end{proof}

\begin{theorem}
Let $\pi_1(C)$ be the algebraic fundamental group of a smooth
connected affine curve $C$ defined over an algebraically closed
field $k$ of characteristic $p>0$ of countable cardinality. Let
$N$ be a  characteristic subgroup of $\pi_1(C)$ such that
there exists a Sylow-$p$ subgroup $M_p$ of $M=\pi_1(C)/N$ which is
infinitely generated. Then $N$ is  isomorphic to a
characteristic subgroup of a free profinite group of countable
rank.
\end{theorem}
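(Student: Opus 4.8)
The plan is to deduce the statement from Theorem \ref{melnikov}(3) in exactly the way the normal case (Theorem \ref{normal}) was deduced from Theorem \ref{melnikov}(2). Since a characteristic subgroup is in particular normal, Theorem \ref{homogeneous} already gives that $N$ is homogeneous, and Theorem \ref{normal} gives that $N$ is isomorphic to a normal subgroup of a free profinite group of countable rank, hence (by Theorem \ref{melnikov}(1)) to an accessible one. By Theorem \ref{melnikov}(3) it therefore suffices to prove that $N/M_S(N)$ is either trivial or infinite for every finite simple group $S$. When $S$ is abelian, i.e.\ cyclic of prime order $l$, one has $M_S(N)=M_l(N)$, and this is precisely what was established inside the proof of Theorem \ref{normal}. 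So I may assume $S$ is non-abelian simple and, arguing by contradiction, that $N/M_S(N)\cong S^{c}$ with $1\le c<\infty$.

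The next step mirrors the second half of the proof of Theorem \ref{normal}, with $M_l$ replaced by $M_S$. As $M_S(N)$ is then open in $N$, I would choose an open subgroup $\cam=\pi_1(U)$ of $\pi_1(C)$ containing $N$ with $M_S(\cam)\cap N=M_S(N)$, and---because $N$ is the intersection of such $\cam$'s---I may take $[\pi_1(C):\cam]$, hence the genus $g$ of $\ov{U}$ and the number $r$ of points of $\ov{U}\setminus U$, as large as I please. The image $\ov{N}$ of $N$ in a suitable ``large and free'' quotient of $\cam$ is then a nontrivial normal subgroup (nontrivial because the chosen $\cam$ carries the $S$-quotient of $N$), while $\ov N$ being a quotient of $N$ forces the number of copies of $S$ in its maximal power-of-$S$ quotient to satisfy $n_S(\ov N)\le c$. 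The contradiction is to be extracted from the opposite inequality, namely that the chosen quotient of $\cam$ forces $n_S(\ov N)$ to grow with the rank.

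The choice of the ``large and free'' quotient splits according to $S$. If $p\nmid|S|$, I would use the maximal prime-to-$p$ quotient $\pi_1(U)^{(p')}$, which by \cite[Exp. XIII, Corollaire 2.12]{sga1} is free profinite of rank $t=2g+r-1$; since $S$ is a prime-to-$p$ group it is a quotient of $\pi_1(U)^{(p')}$, so $\ov N$ lives there. If $p\mid|S|$ then, $S$ being simple, it is a \emph{quasi-$p$} group, the prime-to-$p$ quotient no longer sees $S$, and one must instead realize the relevant powers of $S$ by appealing to the proper solvability of finite quasi-$p$ embedding problems of $\pi_1(U)$ (\cite[Theorem B]{po}, \cite[Corollary 4.6]{hacr}), exactly as in Subsection \ref{p1sec}.

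The hard part---and the only place where one must genuinely use that $N$ is \emph{characteristic} rather than merely normal---is the lower bound on $n_S(\ov N)$. In the pro-$l$ situation of Theorem \ref{normal} this was automatic: for a pro-$l$ group the minimal number of generators equals $n_{\bbz/l}$, so the Nielsen--Schreier lower bound on the rank of a normal subgroup of a free pro-$l$ group of rank $t$ translated directly into $n_{\bbz/l}(\ov N)\ge t$. For a non-abelian simple $S$ no such translation exists, since a normal subgroup of infinite index in a free profinite group can have finite $n_S$; this is exactly why Theorem \ref{melnikov}(2) and (3) differ, and it shows that mere normality is insufficient here. I therefore expect the proof to use the invariance of $\ov N$ under the automorphisms of the free quotient induced from $\aut(\pi_1(C))$---which stabilize $\ov N$ because $N$ is characteristic, once $\cam$ is itself arranged to be characteristic---to force $n_S(\ov N)$ to be $0$ or arbitrarily large; taking $t$ large then contradicts $n_S(\ov N)\le c$. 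Making this invariance argument precise and uniform over all non-abelian simple $S$ (in particular the quasi-$p$ ones, and simultaneously arranging $\cam$ characteristic with $M_S(\cam)\cap N=M_S(N)$) is the main obstacle, and it is the analogue of \cite[Theorem 2.6]{za}. Once it is overcome, $N/M_S(N)$ is trivial or infinite for every finite simple $S$, and Theorem \ref{melnikov}(3) yields that $N$ is isomorphic to a characteristic subgroup of a free profinite group of countable rank.
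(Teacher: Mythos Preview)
Your reduction via Theorem \ref{melnikov}(3) to showing that $N/M_S(N)$ is trivial or infinite for every finite simple $S$, your treatment of abelian $S$ by reference to the proof of Theorem \ref{normal}, and your case split on whether $p\mid|S|$ (invoking proper solvability of quasi-$p$ embedding problems) or $p\nmid|S|$ (invoking the free prime-to-$p$ quotient of rank $2g+r-1$) all match the paper exactly.

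The divergence is in the endgame, which you leave as an acknowledged obstacle. You try to extract the contradiction by bounding $n_S(\ov N)$ from below for the image $\ov N$ of $N$ inside a \emph{free} quotient of $\cam$, and you rightly note that for non-abelian $S$ no Nielsen--Schreier-type bound is available there. The paper sidesteps this entirely by working not in a free quotient but directly in $\cam/M_S(\cam)$, which is itself a direct power of $S$. For non-abelian simple $S$ such a power has a rigid characteristic-subgroup lattice: every normal subgroup of $\prod_I S$ is $\prod_J S$ for some $J\subseteq I$, and since the full symmetric group on $I$ acts by automorphisms, the only characteristic subgroups are $1$ and the whole product (in particular, an infinite product has no nontrivial finite characteristic subgroup). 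The contradiction is then immediate once one knows that the image of $N$ in $\cam/M_S(\cam)$ is characteristic: when $p\mid|S|$, Pop's theorem forces $\cam/M_S(\cam)$ to be infinite, so the finite image must be trivial; when $p\nmid|S|$, one takes $\cam$ of large enough index that $|\cam/M_S(\cam)|>|N/M_S(N)|$ (possible because $|\hat F_t/M_S(\hat F_t)|\to\infty$ with $t=2g+r-1$), and the proper characteristic image must again be trivial. No growth estimate on $n_S(\ov N)$ is ever needed. Your instinct that the characteristic hypothesis on $N$ enters through an invariance argument is correct, and your worry about justifying that the image of $N$ is genuinely characteristic in $\cam/M_S(\cam)$ is legitimate---the paper asserts this without further comment---but the automorphisms that do the work are the factor-permutations of $\prod S$, not automorphisms of a free profinite group.
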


\begin{proof}
Let $S$ be a finite simple group.  By Theorem 2.9 and Theorem \ref{melnikov} (item
(3)), we just need to show that $N/M_S(N)$ is either infinite or
trivial. If $S$ has a prime order $l$, then this is already
contained in the proof of Theorem \ref{normal}.

Suppose now that $N/M_S(N)$ is finite non-trivial. Then there
exists a proper open normal subgroup $\cam$ of $\pi_1(C)$
containing $N$ such that $M_S(\cam)\cap N=M_S(N)$. Note that
$\cam$ is isomorphic to  $\pi_1(U)$ for some finite \'etale
\textsc{Galois} cover $U$ of $C$.

Assume $S$ is a non-abelian simple group of order divisible by $p$
and let $\Gamma$ be a finite direct product of $S$. By the
simplicity of $S$ and the divisibility by $p$ condition, $S$ is
quasi-$p$, i.e., is generated by  its Sylow $p$-subgroups. Next
consider the embedding problem
$(\pi_1(U)\twoheadrightarrow\{1\},\alpha:\Gamma\twoheadrightarrow\{1\})$.
Since $\Ker(\alpha)=\Gamma$ is a quasi-$p$ group, then by
\cite[Theorem B]{po} or \cite[Corollary 4.6]{hacr}, the latter
embedding problem admits a proper solution
$\pi_1(U)\twoheadrightarrow\Gamma$. Therefore we also have an
epimorphism $\cam/M_S(\cam)\twoheadrightarrow\Gamma$.
\emph{A fortiori}, $\cam/M_S(\cam)$ cannot be finite.
Hence, $\cam/M_S(\cam)$ equals an infinite direct product
of copies of $S$, whence it does not have finite characteristic
subgroups, which yields a contradiction. Consequently,
$\#(N/M_S(N))$ is prime to $p$.

Observe now
that we can choose $\cam$ satisfying the property $M_S(\cam)\cap
N=M_S(N)$ with index in $\pi_1(C)$ as large as we wish. In particular,
the genus $g$ of the smooth compactification $\ov{U}$ of $U$ and the
finite number $r$ of points of $\ov{U}\setminus U$, may be taken
arbitrarily large. Again we consider the quotient map $\pi_1(U)\twoheadrightarrow\pi_1(U)^{(p')}$ to the maximal pro-prime-to-$p$ quotient of $\pi_1(U)$. It follows from the last statement of \cite[exp. XIII, corollaire
2.12]{sga1} that 
$\pi_1(U)^{(p')}$ is isomorphic to the maximal pro-prime-to-$p$ quotient of a
free profinite group of rank $2g+r-1$, so it is free of rank
$t=2g+r-1$. We deduce from all this reasoning an epimorphism
$\cam\twoheadrightarrow\hat{F}_t$, where the latter group is a
free pro-prime-to-$p$ group of rank $t$. Thus, $\hat{F}_t/M_S(\hat{F}_t)$ is a
quotient of $\cam/M_S(\cam)$. Since the order of $\hat{F}_t/M_S(\hat{F}_t)$ tends to
infinity as $t$ does, we may assume that the order of
$\Upsilon/M_S(\cam)$ is larger than the order of $N/M_S(N)$. But $N/M_S(N)$
is characteristic in $\cam/M_S(\cam)\cong\prod S$, so $N/M_S(N)=\{1\}$,
because a finite product $\prod S$ does not have any proper characteristic
subgroup.
\end{proof}

As a corollary we obtain the main result of Kumar \cite[Theorem 4.1]{ku}.

\begin{corollary}\label{kumar}
The commutator subgroup of $\pi_1(C)$ is a free
profinite group of countable rank.
\end{corollary}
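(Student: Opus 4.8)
The plan is to derive Corollary \ref{kumar} as a special case of the machinery built up in the previous theorems, by realizing the commutator subgroup $[\pi_1(C),\pi_1(C)]$ as a normal subgroup $N$ satisfying the hypotheses of Theorem \ref{normal}. First I would set $N:=\overline{[\pi_1(C),\pi_1(C)]}$, the closure of the commutator subgroup, so that the quotient $M=\pi_1(C)/N$ is the maximal abelian quotient $\pi_1(C)^{\ab}$. Since $N$ is even characteristic (commutators are preserved by every automorphism), in particular it is normal, so the framework of Theorem \ref{normal} applies provided I can verify the Sylow hypothesis. Thus the first real task is to check that a Sylow $p$-subgroup $M_p$ of $M=\pi_1(C)^{\ab}$ is infinitely generated.

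The key input here is the structure of the maximal pro-$p$ quotient $\pi_1(C)^{(p)}$. As recalled in Remark \ref{remsha} (from \cite[Corollary 3.7]{hais}), for a smooth connected affine curve over an algebraically closed field of characteristic $p$, the group $\pi_1(C)^{(p)}$ is a free pro-$p$ group whose rank equals the cardinality of $k$; since $k$ is countable and $C$ is affine (hence $\pi_1(C)$ is nontrivially wildly ramified), this rank is countably infinite. The Sylow $p$-subgroup $M_p$ of the abelian group $M=\pi_1(C)^{\ab}$ is precisely the pro-$p$ part, which is the abelianization of $\pi_1(C)^{(p)}$, i.e. a free abelian pro-$p$ group of countably infinite rank. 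Hence $M_p$ is infinitely generated, confirming the hypothesis. (One must be slightly careful: $M$ being abelian, its Sylow $p$-subgroup is the direct factor of pro-$p$ elements, and this factor inherits infinite rank from the free pro-$p$ group on infinitely many generators.)

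With the hypotheses verified, Theorem \ref{normal} applies directly and yields that $N$ is isomorphic to a normal subgroup of a free profinite group of countable rank. To upgrade this to the stronger conclusion that $N$ is itself \emph{free} of countable rank, I would invoke Theorem \ref{characterization}(3): it suffices to show that $N/M_S(N)$ is infinite for every finite simple group $S$. This is exactly the content of item (4) of Theorem \ref{melnikov} combined with the characteristic subgroup argument already carried out; indeed, the final theorem of the excerpt (on characteristic subgroups) shows that a characteristic subgroup $N$ with infinitely generated Sylow $p$-part is isomorphic to a characteristic subgroup of a free profinite group, and the accompanying analysis establishes that $N/M_S(N)$ is either trivial or infinite for all $S$. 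For the commutator subgroup one shows it cannot be trivial for any $S$, forcing freeness.

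The main obstacle I anticipate is not any single hard estimate but rather the bookkeeping needed to confirm that the commutator subgroup genuinely meets the Sylow-$p$ hypothesis and that $N/M_S(N)$ is \emph{infinite} (not merely trivial) for every simple $S$, so that freeness—rather than the weaker "normal subgroup of a free group"—is obtained. The cleanest route is to observe that $N = \overline{[\pi_1(C),\pi_1(C)]}$ is characteristic and then cite the final characteristic-subgroup theorem to get $N$ embedded as a characteristic subgroup of a free profinite group of countable rank; freeness then follows because the abelianization $\pi_1(C)^{\ab}$ surjects onto arbitrarily large abelian quotients (reflecting arbitrarily large étale covers), so $N$ cannot become trivial modulo $M_S(N)$ for any $S$, and Theorem \ref{melnikov}(4) gives that $N$ is free of countable rank.
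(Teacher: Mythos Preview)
Your overall architecture matches the paper's: verify the Sylow-$p$ hypothesis via the free pro-$p$ structure of $\pi_1(C)^{(p)}$, apply the characteristic-subgroup theorem to get that $N/M_S(N)$ is trivial or infinite for each finite simple $S$, and then invoke Theorem~\ref{melnikov}(4) (equivalently Theorem~\ref{characterization}(3)) after ruling out triviality. The gap is in that last step, where your justification is wrong.

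You write that ``the abelianization $\pi_1(C)^{\ab}$ surjects onto arbitrarily large abelian quotients \ldots\ so $N$ cannot become trivial modulo $M_S(N)$ for any $S$.'' This is a non sequitur: the size of $\pi_1(C)/N$ says nothing about whether $N$ itself maps onto $S$. (Were $\pi_1(C)$ abelian, its abelianization would still be large while $N$ would be trivial.) The paper handles the two regimes differently, and both arguments genuinely use that $N$ is the commutator subgroup. For non-abelian $S$ one first produces an epimorphism $\pi_1(C)\twoheadrightarrow S$ (via Abhyankar's conjecture if $p\mid |S|$, or via the prime-to-$p$ quotient after passing to a $p$-power \'etale cover of large genus otherwise); since $S$ is perfect and $\pi_1(C)/N$ is abelian, this epimorphism restricts to a surjection $N\twoheadrightarrow S$, so $N/M_S(N)\neq 1$. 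For $S=\bbz/l\bbz$ one passes to an open subgroup $\cam\supset N$ of large index so that its maximal pro-$l$ quotient $\cam^{(l)}$ is free of rank $\ge 2$, hence non-abelian; since $N\supset[\cam,\cam]$, the image of $N$ in $\cam^{(l)}$ is nontrivial, giving $N/M_l(N)\neq 1$. You need to supply these case arguments (or equivalent ones) rather than appealing to the size of the abelianization.
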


\begin{proof}
Let $K$ be the commutator subgroup of $\pi_1(C)$. Since this group is
characteristic and $\pi_1(C)/K$ has an infinitely generated
Sylow-$p$ subgroup (\emph{cf.} \cite[Corollary 3.7]{hais}), by the preceding theorem it is isomorphic to a
characteristic subgroup of a free profinite group of countable
rank. It then follows from Theorem \ref{melnikov} (item (3)) that $K/M_S(K)$ is
either trivial
or infinite.  Hence, by Theorem \ref{characterization} (item (3)), it suffices
to show that $K/M_S(K)$ is non-trivial for every finite simple
group $S$. 

Suppose that $S$ is non-abelian. If the order of $S$ is divisible by $p$, then $S$ is a quasi-$p$ group and by \textsc{Abhyankar}'s conjecture (completely proved in \cite{ha}), $S$ is necessarily a quotient of $\pi_1(C)$. Hence the quotient $\pi_1(C)/M_S(\pi_1(C))$ is non-trivial. If $p$ does not divide the order of $S$, then after replacing $C$ by an \'etale \textsc{Galois} cover $C'$ of $C$ of $p$-power order whose fundamental group still contains $K$, and using the fact that $C'$ can be chosen of arbitrarily high genus, we may assume that $2g+r-1\ge2$, where $g$ is the genus of $C$, $r=\#(\ov{C}\setminus C)$ and $\ov{C}$ is a smooth compactification of $C$. Since $S$ can be generated by at most 2 elements, it follows that $S$ is indeed a quotient of $\pi_1(C)$ and therefore $\pi_1(C)/M_S(\pi_1(C))$ is non-trivial. In particular, the group $\pi_1(C)/M_S(\pi_1(C))$ is
a nontrivial quotient of $K$, so $K/M_S(K))$ is non-trivial. 

As already observed we may and will assume that $2g-2+r\ge2$. For
$S$ of prime order $l\ne p$, we observe that the maximal pro-$l$
quotient $\pi_1(C)^{(l)}$ is isomorphic to the maximal
pro-$l$-quotient of a free profinite group of rank $2g+r-1$, so it
is free pro-$l$ of rank $t=2g+r-1$. So taking an open subgroup
$\cam$ of $\pi_1(C)$ containing $K$ of sufficiently large
index we see that its maximal pro-$l$ quotient $\cam^{(l)}$
is not abelian, and therefore the image of $K$ there is
non-trivial (since $K$ contains the commutator of $\cam$). Similarly, when $l=p$, it follows from Remark \ref{remsha} that $\pi_1(C)^{(p)}$ is a free pro-$p$ group with rank $\aleph_0$. In particular, by choosing an open subgroup $\cam$ of $\pi_1(C)$ we arrive at the same conclusion (\emph{i.e.} that the image of $K$ in $\cam^{(p)}$ cannot be trivial).
\end{proof}

\begin{remark}The techniques of the proof of the above result can be used to prove a more general result.  Let $N$ be the  kernel of an epimorphism of $\pi_1(C)$ to an abelian profinite group having infinitely generated  $p$-Sylow.  Then $N$ is free.   Indeed, in this case $N$ satisfies the hypothesis of Theorem 2.9, hence it is homogenous.  It follows from Theorem 2.7(4) that in order to prove that $N$ is free, we need to show that $N/M_S(N)$ is infinite for every finite simple group $S$.  In the case where $S$ is abelian, this follows exactly as in Corollary 5.7 using Theorem 5.3 instead of Theorem 5.6 and the item (2) of Theorem 2.7 instead of item (3).  In the case where $S$ is non-abelian, suppose that $N/M_S(N)=\prod^nS$ is finite and denote $R: =\prod^{n+1}S$. As was observed in the proof of Corollary 5.7, after having replaced $\pi_1(C)$ by an open normal subgroup $\cam$ of $\pi_1(C)$ containing $N$, we may assume that $C$ has genus sufficiently large to allow the existence of an epimorphism $f:\pi_1(C)\twoheadrightarrow R$. Since $R$ is perfect and the quotient modulo $\pi_1(C)/N$ is abelian,  then $f(N)=R$. This  contradicts  $N/M_S(N)=\prod^nS$. Thus  $N/M_S(N)$ is infinite.  \end{remark}

Next we give an example that is covered by Theorem 2.9, but not covered by the results of \textsc{Kumar}.

\begin{example}\label{examku}
Let $p$ be a prime number and let $S$ be a simple non-abelian
group containing a non-trivial \textsc{Sylow} $p$-subgroup. Let
$M=(\prod S)\times C_p$ be a direct product of a countable direct
power of $S$ by a group of order $p$. Then $M$ is quasi-$p$ group
(\emph{i.e.}, it is generated by its \textsc{Sylow}
$p$-subgroups). Moreover, $M=\varprojlim_JM_J$, where $J$ ranges
over finite sets and $M_J=\prod_{J} S$ a finite direct power of
$S$. Since for $|J_1|>|J_2|$ the embedding problem
\begin{equation}
\xymatrix { &\pi_1(C) \ar@{>>}[d]^{\psi}\\
M_{J_1} \ar@{>>} [r]^{\alpha}&M_{J_2}.}
\end{equation}
admits a proper solution (because the kernel of $\alpha$ is a direct product
of a finite number of copies of $S$, and so it is a finite quasi $p$-group, \emph{cf.} \cite[Theorem B]{po}), the universal property of
the inverse limit provides an epimorphism $\eta_1:\pi_1(C)\twoheadrightarrow
M$. Thus we can write $M$ as a quotient $M=\pi_1(C)/N$ of $\pi_1(C)$ by a normal subgroup $N$. The
quotient group $M/C_p$ has an infinitely generated Sylow $p$-subgroup. The composition of the epimorphism $\eta_1$ with the natural quotient epimorphism $\eta_2:M\twoheadrightarrow M/C_p$, induces an epimorphism $\eta_3:\pi_1(C)\twoheadrightarrow M/C_p$. Therefore, $M/C_p$ can also be written as a quotient $\pi_1(C)/N_p$ of $\pi_1(C)$ by a normal subgroup $N_p$. It now follows from
Theorem \ref{homogeneous} that $N_p$ is homogeneous. Since $N$
is open in $N_p$, we conclude that it is indeed free (as actually
any proper open subgroup of $N_p$) by Corollary
\ref{propernormal}. \hfill $\square$
\end{example}
%\newpage

\section{Appendix}

We rewrite here the proof of Theorem 2.2 in \cite{za}, where the
embedding problem (\ref{defhomo}) is properly solved for a
projective normal subgroup $N$  of the algebraic fundamental group
$\Gamma$ of a smooth connected projective curve of genus $g>1$
defined over an algebraically closed field of characteristic zero.
Recall that this group is just the profinite completion of the
topological fundamental group $\Pi^{\text{top}}$ of a compact
\textsc{Riemann} surface of genus $g>1$ (\emph{cf.} \cite[Expos\'e
X, end of p. 208 and beginning of p. 209]{sga1}). The group
$\Pi^{\text{top}}$ admits a presentation of the form
$\Pi^{\text{top}}=\langle x_i,y_i\mid \prod_{i=1}^g [x_i,y_i]=1,
i=1,\ldots, g\rangle$. Here we exploit this presentation; in
particularly we rely heavily on the relation $\prod_{i=1}^g
[x_i,y_i]=1$. In the case of positive characteristic an explicit
presentation for $\pi_1(C)$ is not available. Therefore, in order
to get (as we did) a similar statement in positive characteristic
(for smooth connected affine curves), it was necessary to use
methods from algebraic geometry  which allow us to construct
solutions for embedding problems.

The present proof simplifies
the proof of \cite{za} and eliminates small but embarrassing
errors. Note also that we do not need here splitting of the
embedding problem.

\begin{lemma}\label{appropriate}
Let $\Gamma=\langle x_i,y_i\mid \prod_{i=1}^g [x_i,y_i]\rangle$ be
the the algebraic fundamental group of a smooth connected
projective curve of genus $g>1$ defined over an algebraically
closed field of characteristic zero. Consider the following finite
embedding problem:
\begin{equation}\label{apeq1}
\xymatrix { &\Gamma \ar@{>>}[d]^f\\
              A \ar@{>>} [r]^{\alpha}&B.  }
\end{equation}
Suppose it admits a weak solution
$\varphi:\Gamma\longrightarrow A$ such that
$\varphi(x_1)=\varphi(x_{2})=\cdots =\varphi(x_{sn+s})$ and
$\varphi(y_1)=\varphi(y_{2})=\cdots =\varphi(y_{sn+s})$, where
$n=|K \varphi(\Gamma)|$  and $s$ is the minimal number of
generators of $K=ker(\alpha)$. Then (\ref{apeq1}) admits a proper
solution.
\end{lemma}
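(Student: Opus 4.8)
The plan is to start from the given weak solution $\varphi$ and to modify it, generator by generator, into a genuine (surjective) solution, exploiting the hypothesis that $sn+s$ of the generators are sent to the \emph{same} pair $(u,v):=(\varphi(x_1),\varphi(y_1))$.

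First I would record the only piece of general structure available. Since $\alpha\circ\varphi=f$ is an epimorphism, $\alpha(\varphi(\Gamma))=B$, hence $\varphi(\Gamma)\cdot K=A$; moreover the same holds for \emph{every} homomorphism $\psi\colon\Gamma\to A$ with $\alpha\psi=f$, namely $\psi(\Gamma)\cdot K=A$. Consequently such a $\psi$ is a proper solution if and only if $K\subseteq\psi(\Gamma)$. This reduces the task to forcing the finite kernel $K$ into the image; and as $K$ is generated by $s$ elements, it suffices to put a chosen generating set $k_1,\dots,k_s$ of $K$ into $\psi(\Gamma)$.

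Next I would construct $\psi$ by perturbing $\varphi$ only on the uniform block $x_1,\dots,x_{sn+s}$, leaving $\psi=\varphi$ on all the other generators and on every $y_i$. Writing $\psi(x_i)=u\,t_i$ with $t_i\in K$ keeps $\alpha\psi=f$ automatically (because $\alpha(t_i)=1$), and keeping the first pair unperturbed ($t_1=1$) guarantees $u=\psi(x_1)\in\psi(\Gamma)$, so that $t_i=\psi(x_1)^{-1}\psi(x_i)\in\psi(\Gamma)$ for every $i$. Thus taking $t_2,\dots,t_{s+1}:=k_1,\dots,k_s$ injects a generating set of $K$. The only subtlety is that $\psi$ is a homomorphism precisely when the sole defining relation survives, i.e. when the block keeps its commutator value $\prod_{i=1}^{sn+s}[\psi(x_i),\psi(y_i)]=[u,v]^{\,sn+s}$. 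A direct computation gives $[u\,t,v]=u[t,v]u^{-1}[u,v]$, so with $d:=[u,v]$ and $c_i:=u[t_i,v]u^{-1}\in K$ the telescoping identity
\begin{equation*}
\prod_{i=1}^{sn+s}(c_i d)=\Big(\prod_{i=1}^{sn+s} d^{\,i-1}c_i\,d^{-(i-1)}\Big)\,d^{\,sn+s}
\end{equation*}
shows the relation is preserved exactly when $\prod_{i} d^{\,i-1}c_i\,d^{-(i-1)}=1$, a single equation valued in $K$. Injecting the $k_j$ produces a fixed partial product $\Pi_0:=\prod_{i=2}^{s+1}d^{\,i-1}c_i\,d^{-(i-1)}\in K$, and I would then spend the remaining $sn-1$ pairs $t_{s+2},\dots,t_{sn+s}$ to realize $\Pi_0^{-1}$ and close the relation.

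The main obstacle is exactly this cancellation, and it is the reason the hypothesis supplies so many identical pairs: the map $t\mapsto[t,v]$ is \emph{not} surjective onto $K$, so one cannot repair the commutator locally, pair by pair. What must be proved is that $\Pi_0^{-1}$ lies in the subgroup $E\le K$ generated by the sets $d^{\,j}u[K,v]u^{-1}d^{-j}$ — which it does, being itself such a product — and that a product $\prod_{i=s+2}^{sn+s} d^{\,i-1}u[t_i,v]u^{-1}d^{-(i-1)}$, in which each factor ranges over a subset containing $1$ and the factors jointly generate $E$, sweeps out all of $E$ once enough factors are available. Here the budget $sn+s=s(n+1)$, with $n=|K\varphi(\Gamma)|=|A|$, is calibrated so that there are on the order of $n$ spare pairs per injected generator, which is what a product-of-subsets argument in the finite group $K$ needs (using $|E|\le|K|\le n$ and that the consecutive exponents $i-1$ run through every residue modulo $\ord(d)$). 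Verifying this counting — hence that the single $K$-valued equation is solvable simultaneously with the injection of $k_1,\dots,k_s$ — is the technical heart; everything else is formal.
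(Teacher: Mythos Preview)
Your reduction to the single $K$-valued equation is correct, but the ``technical heart'' you postpone is a genuine gap, not a formality. You need to realize a specific element $\Pi_0^{-1}\in K$ as an \emph{ordered} product $\prod_{i=s+2}^{sn+s} d^{\,i-1}c_i\,d^{-(i-1)}$ with each $c_i\in u[K,v]u^{-1}$. Each factor set contains $1$ and together they generate some subgroup $E$, but an ordered product of $m$ such subsets need not cover $E$ even when $m\ge|E|$; the conjugation exponents are forced, and nothing in your sketch controls how the partial products grow. The appeal to ``$n$ spare pairs per injected generator'' and to residues modulo $\ord(d)$ is not an argument.

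The paper avoids this difficulty entirely by a different allocation of the uniform block. Instead of perturbing $s$ isolated generators and then trying to repair, it partitions $x_1,\dots,x_{sn}$ into $s$ consecutive blocks of length $n$ and perturbs the \emph{entire} $i$-th block identically, sending each $x_j$ in that block to $\bar x\,k_{i}$ (where $\bar x=\varphi(x_1)$). The commutator product over the $i$-th block is then $[\bar x\,k_{i},\bar y]^{\,n}$, and since $[\bar x\,k_{i},\bar y]\in K\varphi(\Gamma)$ with $n=|K\varphi(\Gamma)|$, this $n$-th power is $1$. The same reasoning shows the original block product $[\bar x,\bar y]^{n}$ equals $1$, so the tail $\prod_{j>sn}[\bar x_j,\bar y_j]$ is already $1$, and the relation holds with no repair step at all. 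The extra $s$ unperturbed pairs $x_{sn+1},\dots,x_{sn+s}$ are then used to extract the $k_i$: since $\psi(x_{sn+i})=\bar x$ and $\psi(x_{(i-1)n+1})=\bar x\,k_i$, one gets $k_i\in\psi(\Gamma)$. The point you missed is that the hypothesis hands you $n$-th powers in a group of order $n$, so the relation takes care of itself if you perturb in blocks rather than pointwise.
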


\begin{proof}
We use the notation  $[x,y]$ for $x^{-1}y^{-1}xy$ and $\bar g$ for
$\varphi(g)$ in the argument that follows. Choose a minimal set of
generators $k_1,\ldots, k_s$ of $K$. Let $\eta$ be a map that
sends $x_{in+1},x_{in+2},\ldots x_{in+n}$ to $\bar
x_{sn+i+1}k_{i+1}$ for all $i=0,\ldots s-1$ and coincides with
$\varphi$ on the other generators. Then $\eta$ extends to a
homomorphism if $$ [\eta(x_1),\eta(y_1)]\ldots
[\eta(x_{g}),\eta(y_{g})]=1
$$
(since this would mean that the homomorphism from a free profinite
group\break $F(x_1,y_1,\ldots,x_{g}, y_{g})\longrightarrow A$
extending $\eta$ factors through $\Gamma$).

\bigskip \noindent Since $[\bar x_{in+1}k_i,\bar y_{in+1}]\in
K\varphi(\Gamma)$ one has $([\bar x_{in+1}k_i,\bar
y_{in+1}])^n=1=([\bar x_{in+1},\bar y_{in+1}])^n.$ Therefore
$[\eta(x_1),\eta(y_1)]\cdots
[\eta(x_{sn}),\eta(y_{sn})]=\prod_{i=0}^{s-1}([\bar
x_{in+1}k_i,\bar y_{in+1}])^n =1$ as well as $[\bar x_1,\bar
y_1]\cdots [\bar x_{sn},\bar y_{sn}]=\prod_{i=0}^{s-1}([\bar
x_{in+1},\bar y_{in+1}])^n=1$. But
$$[\eta(x_{sn+1}),\eta(y_{sn+1})]\cdots
[\eta(x_{g}),\eta(y_{g})]=[\bar x_{sn+1},\bar y_{sn+1}]\cdots
[\bar x_{g},\bar y_{g}].$$  It follows that
$[\eta(x_1),\eta(y_1)]\cdots [\eta(x_{g}),\eta(y_{g})]=[\bar
x_1,\bar y_1]\cdots [\bar x_{g},\bar y_{g}]=1$ as needed.

\bigskip\noindent Thus $\eta$ extends to a homomorphism $\psi\colon
\Gamma\longrightarrow A$ such that  $f=\alpha\psi$. But
$$\psi(x_1^{-1}x_{sn+1})=k_1, \ldots,
\psi(x_{n(s-1)+1}^{-1}x_{ns+s})=k_s$$ so $\psi$ is an epimorphism
and the lemma is proved.\end{proof}

\noindent The next lemma shows that by replacing $\Gamma$ with an
open subgroup of sufficiently large index, we can reduce ourselves
to the situation of Lemma \ref{appropriate}.

\begin{lemma}\label{reduction} Let $N$ be
a  projective subgroup of $\Gamma$ and consider the embedding problem
\begin{equation}
\xymatrix { &N \ar@{>>}[d]^f\\
              A \ar@{>>} [r]^{\alpha}&B,  }
\end{equation}
where $A$, $B$ are finite. Then there exists an open  subgroup $U$
of $\Gamma$ containing $N$ and an embedding problem
\begin{equation}
\xymatrix { &U \ar@{>>}[d]^{\eta}\\
              A \ar@{>>} [r]^{\alpha}&B,  }
\end{equation}
satisfying hypothesis of Lemma \ref{appropriate} such that the
restriction $\eta_{|N}=f$.\end{lemma}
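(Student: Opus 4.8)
The plan is to build the pair $(U,\eta)$ in three stages: first produce a weak solution of the embedding problem for $N$, then extend it to an open overgroup, and only afterwards enlarge that overgroup enough to force the diagonal pattern of generator values demanded by Lemma \ref{appropriate}. First I would exploit that $N$ is projective: the embedding problem given by $f\colon N\to B$ and $\alpha\colon A\to B$ then has a weak solution $\varphi_0\colon N\to A$ with $\alpha\circ\varphi_0=f$. Next, exactly as in the d\'evissage of Remark \ref{remdevi}, I would invoke \cite[Lemma 8.3.8]{riza} to extend the continuous homomorphism $\varphi_0$ to a homomorphism $\varphi\colon U_0\to A$ defined on some open subgroup $U_0$ with $N\le U_0\le\Gamma$ and $\varphi|_N=\varphi_0$. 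Setting $\eta_0:=\alpha\circ\varphi$ gives $\eta_0|_N=\alpha\circ\varphi_0=f$; since $f$ is onto $B$, the map $\eta_0$ is an epimorphism and $\varphi$ is a weak solution of the embedding problem given by $\eta_0\colon U_0\to B$ and $\alpha$. Writing $K:=\Ker(\alpha)$ and $s:=d(K)$, surjectivity of $\eta_0|_N$ forces $K\varphi(U_0)=A$, so the integer $n=|K\varphi(U_0)|$ appearing in Lemma \ref{appropriate} is simply $|A|$.

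Then I would use that $N$ has infinite index, which is forced in the intended application where $N$ is normal and $\Gamma/N$ is infinite, in order to shrink $U_0$ while staying above $N$. Since $\Gamma/N$ is an infinite profinite group it admits open subgroups of arbitrarily large index, so there is an open $U$ with $N\le U\le U_0$ and $[\Gamma:U]$ as large as desired. As an open subgroup of the profinite surface group $\Gamma$, $U$ is itself a profinite surface group carrying the standard one-relator presentation, of genus $g'$ governed by Riemann--Hurwitz, $g'-1=[\Gamma:U](g-1)$; hence I may assume $g'\ge sn+s$. Restricting, $\varphi|_U$ is still a weak solution of the embedding problem given by $\eta:=\alpha\circ\varphi|_U$ and $\alpha$, and $\eta|_N=f$. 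At this point every requirement of Lemma \ref{reduction} is met except the diagonal condition.

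The hard part is this last step: choosing a standard geometric generating system $x_1,y_1,\dots,x_{g'},y_{g'}$ of $U$, with the single relation $\prod_{i=1}^{g'}[x_i,y_i]=1$, in which the weak solution takes a common value $\bar x:=\varphi(x_1)=\cdots=\varphi(x_{sn+s})$ and $\bar y:=\varphi(y_1)=\cdots=\varphi(y_{sn+s})$, without disturbing the restriction to $N$. My approach would be to realise these repeated handles inside the covering of surfaces underlying $U\le U_0$: after selecting a loop on the surface of $U_0$ lying away from the subsurface that carries $N$ and passing to a cover in which this loop has many disjoint lifts inside a single sheet, the corresponding handles of $U$ all map under $\varphi$ to the same pair $(\bar x,\bar y)$, while the generators describing $N$ are left untouched so that $\eta|_N=f$ is preserved. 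The genuine obstacle, which is exactly what the large-index (hence large-genus) freedom is meant to overcome, is to guarantee that at least $sn+s$ mutually equal handles can be produced and to verify that the single surface relation survives this rechoice of generators; these are precisely the combinatorial facts that the explicit presentation available in characteristic zero makes accessible and that then feed directly into the computation of Lemma \ref{appropriate}.
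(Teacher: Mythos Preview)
Your first two stages match the paper's argument: use projectivity of $N$ to get a weak solution $\varphi_0:N\to A$, then invoke \cite[Lemma 8.3.8]{riza} to extend it to some open $U_0\supseteq N$, and note that any open subgroup of $\Gamma$ is again a profinite surface group whose genus can be made as large as one likes by passing to smaller open subgroups above $N$.

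The gap is in your third stage. You describe the diagonal condition as ``the hard part'' and propose to manufacture the repeated handles by a topological covering construction (lifting a loop ``away from the subsurface that carries $N$''), but you do not actually carry this out, and the sketch is too vague to be verified: $N$ is an arbitrary closed projective subgroup, not attached to any subsurface, and ``many disjoint lifts inside a single sheet'' is not a well-defined operation on covers. You also take $g'\ge sn+s$, which by itself is not enough to force any repetition of values.

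The paper's resolution of this step is much more elementary and entirely algebraic. Fix \emph{any} standard generating system $x_1,y_1,\dots,x_{g'},y_{g'}$ of $U$. The target of $\varphi$ is finite, so there are only finitely many possible pairs $(\varphi(x_j),\varphi(y_j))$; choosing $g'$ large enough, pigeonhole yields indices $j_1<\cdots<j_{sn+s}$ with $\varphi(x_{j_1})=\cdots=\varphi(x_{j_{sn+s}})$ and $\varphi(y_{j_1})=\cdots=\varphi(y_{j_{sn+s}})$. To move these to the first $sn+s$ slots while preserving the surface relation, the paper uses the commutator identity
\[
\prod_{j=1}^{g'}[x_j,y_j]=[x_{j_1},y_{j_1}]\bigl([x_1,y_1]\cdots[x_{j_1-1},y_{j_1-1}]\bigr)^{[x_{j_1},y_{j_1}]}[x_{j_1+1},y_{j_1+1}]\cdots[x_{g'},y_{g'}],
\]
so that replacing $x_1,y_1,\dots,x_{j_1-1},y_{j_1-1}$ by their conjugates under $[x_{j_1},y_{j_1}]$ gives a new standard generating system with the desired pair in position $1$; iterating brings all $sn+s$ pairs to the front. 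No geometry is needed, and the restriction $\varphi|_N$ is untouched because $\varphi$ itself is never modified, only the presentation of $U$.
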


\begin{proof} Since $N$ is projective there exists a
homomorphism $f':N\longrightarrow A$ such that
$\alpha\varphi(N)=B$. Put $B'=f'(N)$.

By Lemma {8.3.8}  in [RZ-2000] there exists  an open  subgroup $U$
of $\Gamma_g$ containing $N$ and an epimorphism
$\varphi:U\longrightarrow B'$ such that  $\varphi_{|N}=f'$. Since
an open subgroup of $\Gamma_g$ is again a profinite surface group,
replacing $\Gamma_g$ by $U$ we may assume the existence of the
following commutative diagram:
\begin{equation}
\xymatrix {& N \ar@{>>}[dd]_(0.6)f\ar[r]\ar[dl]_{f'}&\Gamma_g\ar@{>>}[ddl]^{\varphi_0}\ar[dll]^(0.7){\varphi}\\
           B'\ar[d]\ar[dr]&&\\
              A \ar@{>>} [r]^{\alpha}&B,&  }
\end{equation}
where the top horizontal map is the natural inclusion.
 Let $U_i$ be the family of all open  subgroups of $\Gamma_g$ containing $N$.
Then $\varphi_i:=\varphi_{|U_i}$ is an epimorphism for every $i$.
Note that every $U_i$ is again a profinite surface group and so
has a presentation $U_i=\langle x_1,y_1,\ldots x_{g_i},
y_{g_i}\mid \prod_{j=1}^{g_i}[x_i,y_i]\rangle$, where the genus
$g_i$ of $U_i$ can be computed by the formula
$g_i-1=[\Gamma_g:U_i](g-1)$ (we abusing notation of course, for
different $i$, the elements $x_1,y_1\ldots$ are not the same and
not the same as those in $\Gamma$). This means that we can choose
$i$ with the number of generators of $U_i$ sufficiently large, so
that there exists $i$ with $\varphi(x_{j_1})=\cdots
=\varphi(x_{j_{sn+s}})$ and $\varphi(y_{j_1})=\cdots
=\varphi(y_{j_{sn+s}})$, where $n=|K||B'|$, $s$ is the minimal
number of generators of $K$ and $j_l>j_k$ whenever $l>k$. We shall
use the notation $x^y$ for $y^{-1}xy$ in the argument to follow.
Suppose $j_1\neq 1$. Then $\prod_{j=1}^{g_i}
[x_j,y_j]=[x_{j_1},y_{j_1}]([x_1,y_1]\cdots
[x_{j_1-1},y_{j_1-1}])^{[x_{j_1},y_{j_1}]}[x_{j_1+1},y_{j_1+1}]\cdots
[x_{g_i},y_{g_i}]\,$ so replacing the generators $x_1, y_1,\ldots,
x_{j_1-1},y_{j_1-1}$ by
$x_1^{[x_{j_1},y_{j_1}]},y_1^{[x_{j_1},y_{j_1}]},\ldots
x_{j-1}^{[x_{j_1},y_{j_1}]},y_{j-1}^{[x_{j_1},y_{j_1}]}$ we may
assume that $j_1=1$. Continuing similarly, we in fact may assume
that $\varphi(x_1)=\varphi(x_{2})=\cdots =\varphi(x_{sn+s})$ and
$\varphi(y_1)=\varphi(y_{2})=\cdots
=\varphi(y_{sn+s})$.\end{proof}

 We now solve the embedding problem for  an accessible projective subgroup $N$, having $N$
normal as a particular case. Note that only in this proof we assume the
minimality of the kernel $K$.

\begin{theorem}\label{char0}  A projective accessible subgroup  $N$ of
$\Gamma$ is  isomorphic to an accessible subgroup of a free
profinite group.\end{theorem}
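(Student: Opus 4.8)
The plan is to reduce the statement to \textsc{Melnikov}'s criterion. By Theorem~\ref{melnikov}(1) a second countable profinite group is isomorphic to an accessible subgroup of a free profinite group of countable rank if and only if it is homogeneous; since $N$ is a closed subgroup of the topologically finitely generated, hence second countable, group $\Gamma$, it is itself second countable, so it suffices to prove that $N$ is homogeneous. Accordingly I would fix finite groups $A$ and $B$ and an embedding problem
\[
\xymatrix{ & N \ar@{>>}[d]^{f} \\ A \ar@{>>}[r]^{\alpha} & B }
\]
with $f$ an epimorphism, with $K:=\Ker(\alpha)$ minimal normal in $A$, and with $K\le M(A)$, and try to produce an epimorphism $\Psi\colon N\twoheadrightarrow A$ satisfying $\alpha\Psi=f$.

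The first move is to replace $N$ by a sufficiently large open overgroup. Since $N$ is projective, Lemma~\ref{reduction} applies and furnishes an open subgroup $U$ of $\Gamma$ with $N\le U$ together with an embedding problem $(\eta\colon U\twoheadrightarrow B,\ \alpha\colon A\twoheadrightarrow B)$ that satisfies the hypotheses of Lemma~\ref{appropriate} and for which $\eta_{|N}=f$. As $U$ is again a profinite surface group, Lemma~\ref{appropriate} then yields a \emph{proper} solution $\psi\colon U\twoheadrightarrow A$, that is, an epimorphism with $\alpha\psi=\eta$. At this stage the entire difficulty of solving embedding problems over a surface group has been absorbed into these two lemmas.

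It remains to descend $\psi$ to $N$, which I would do exactly as in Subsection~\ref{gpthar}. Put $\Psi:=\psi_{|N}$. Then $\alpha(\Psi(N))=\alpha\psi(N)=\eta(N)=f(N)=B$, whence $\Psi(N)\cdot K=A$; since $K\le M(A)$ this gives $\Psi(N)\cdot M(A)=A$. Intersecting an accessible chain for $N$ in $\Gamma$ with $U$ shows that $N$ is accessible in $U$, so its image $\Psi(N)=\psi(N)$ under the epimorphism $\psi\colon U\twoheadrightarrow A$ is subnormal in $A$. A proper subnormal subgroup of a finite group is contained in a maximal normal subgroup, which in turn contains $M(A)$; this contradicts $\Psi(N)\cdot M(A)=A$ unless $\Psi(N)=A$ (\emph{cf.}~\cite[Proposition~8.3.6]{riza}). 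Hence $\Psi$ is a proper solution and $N$ is homogeneous, as required.

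The main obstacle, and the only place where the weaker hypothesis \emph{accessible} (rather than \emph{normal}, as used in Subsection~\ref{gpthar}) must be handled with care, is this last step: one needs accessibility of $N$ in $\Gamma$ to force $\psi(N)$ to be subnormal in the finite quotient $A$, so that the maximal-normal-subgroup argument closes the gap $\Psi(N)\cdot M(A)=A\Rightarrow\Psi(N)=A$. Everything else is a routine verification of the homogeneity criterion built on Lemmas~\ref{appropriate} and~\ref{reduction}.
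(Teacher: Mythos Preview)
Your proof is correct and follows essentially the same route as the paper's: reduce to \textsc{Melnikov}'s homogeneity criterion, invoke Lemmas~\ref{reduction} and~\ref{appropriate} to get an epimorphism $\psi\colon U\twoheadrightarrow A$ with $\alpha\psi_{|N}=f$, and then use subnormality of $\psi(N)$ together with $\psi(N)M(A)=A$ and \cite[Proposition~8.3.6]{riza} to conclude $\psi(N)=A$. You are in fact slightly more explicit than the paper at two points: you check second countability of $N$, and you justify subnormality of $\psi(N)$ in $A$ by intersecting an accessible chain for $N$ in $\Gamma$ with $U$ (the paper simply asserts subnormality, which is correct but needs exactly this observation since $A$ is a quotient of $U$, not of $\Gamma$).
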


\bigskip

\begin{proof}
  By Theorem  \ref{melnikov} we just need to solve the following embedding problem for $N$:
\begin{equation}
\xymatrix { &N \ar@{>>}[d]^f\\
              A \ar@{>>} [r]^{\alpha}&B,  }
\end{equation}
where $A$, $B$ are finite, $K:={\rm Ker}(\alpha)$ is minimal
normal and $K\leq M(A)$.

By two preceding lemmas we have an open subgroup $N\leq U\leq
\Gamma$ and an epimorphism  $\psi:U\longrightarrow A$ such that
$\alpha(\psi(N)=B$ and so $\psi(N)M(A)=A$. Since $\psi(N)$ is a
subnormal subgroup of $A$ by Proposition 8.3.6 in [RZ] $\psi(N)=A$
as needed.
\end{proof}

\end{document}